\begin{document}

%
%
\pagestyle{headings}  
%

\mainmatter              
%
\title{Explicit Bound for the Prime Ideal Theorem in Residue Classes}

%
%
%
\author{Maciej Grze\'skowiak \thanks{The author was partially supported by the grant no. 2013/11/B/ST1/02799 from
the National Science Centre.} }
%
%

\institute{Adam Mickiewicz University,\\ Faculty of Mathematics and Computer Science,\\
Umultowska 87, 61-614 Pozna\'n, Poland\\
\email{maciejg@amu.edu.pl}}

\maketitle

\begin{abstract}
 We give explicit numerical estimates for the generalized Chebyshev functions. Explicit results of this kind are useful  for estimating of computational complexity of  algorithms which generates special primes. Such primes are needed to construct an elliptic curve over prime field using complex multiplication method.
\end{abstract}

\section{Introduction}\label{S:intro}
Let $K$ denote any fixed totally imaginary field of the discriminant $\Delta=\Delta(K)$ and  degree $[K :\mathbb{Q}]=2r_2$, where $2r_2$ is the number of complex-conjugate fields of $K$.  Denote by $\mathfrak{f}$ a given non-zero integral ideal of
the ring of algebraic integers $\mathcal{O}_K$ and by $H \pmod{\mathfrak{f}}$ any ideal class mod $\mathfrak{f}$ in the ''narrow'' sense. Let $h_\mathfrak{f}^*(K)$ be the number of elements of $H$.
Let $\chi(H)$ be a character of the abelian group  of ideal classes $H \pmod{\mathfrak{f}}$, and let $\chi(\mathfrak{a})$ be the usual extension
of $\chi(H)$. Let $s=\sigma+it$. The Hecke--Landau zeta-functions associated to $\chi$, are defined by
\begin{eqnarray*}
\zeta(s,\chi)=\sum_{\mathfrak{a}\in \mathcal{O}_K}\frac{\chi(\mathfrak{a})}{(N \mathfrak{a})^s},  \qquad \sigma >1,
\end{eqnarray*}
where $\mathfrak{a}$ runs through integral ideals  and $N \mathfrak{a}$ is the norm of $\mathfrak{a}$. Throughout, $\chi_0$ denote the principal character modulo $\mathfrak{f}$.
Let
\begin{align*}
E_0=E_0(\chi)=\left\{\begin{array}{ccc}
                1 &  \mbox{for} & \chi=\chi_0 \\
                0 &  \mbox{for} & \chi\neq\chi_0
              \end{array}\right.
\end{align*}
If $\chi$ is a primitive character, then $\zeta(s,\chi)$ satisfies the functional equation
\[
\Phi(s,\chi)=W(\chi)\Phi(1-s,\overline{\chi}),\qquad |W(\chi)|=1,
\]
where
\begin{eqnarray*}
\Phi(s,\chi)=A(\mathfrak{f})^s\Gamma(s)^{r_2}\zeta(s,\chi)
\end{eqnarray*}
and
\[
A(\mathfrak{f})=(2\pi)^{-r_2}\sqrt{|\Delta | N\mathfrak{f}}.
\]
Let $\Lambda(\mathfrak{a})$ be the generalized Mangoldt function. Fix $X \mod{\mathfrak{f}} \in H$.  We define,
\begin{align*}
\Psi(x,X)=\sum_{\stackrel{\scriptstyle x \leq N\mathfrak{a}\leq 2x}{\mathfrak{a}\in X}}\Lambda(\mathfrak{a})=\sum_{\stackrel{\scriptstyle x \leq N\mathfrak{p}^m\leq 2x}{\mathfrak{p}^m\in X}}\log N \mathfrak{p},
\end{align*}
 where $\mathfrak{p}$ runs through prime ideals of $\mathcal{O}_K$. The aim of this paper is to proof the following theorem.
\begin{theorem}\label{mainTheorem}
Let $K$, $\Delta$, $\mathfrak{f}$, $\zeta(s,\chi)$ denote respectively any algebraic number field of degree $[K : \mathbb{Q}] =2r_2$, the discriminant of $K$, any ideal in $K$ and any Hecke-Landau zeta function with
a character $\chi$ modulo $\mathfrak{f}$. Fix $0< \varepsilon <1$. If $|\Delta|\geq 9$ and there is no  zero in the region
\begin{align}\label{OBszarGL}
\sigma\geq 1-0.0795\left(\log |\Delta| + 0.7761\log\left((|t|+1)^{2r_2}(N\mathfrak{f})^{1-E_0}\right)\right)^{-1},
\end{align}
then
\begin{align*}
& \Psi(x, X) \geq \frac{x(1 -\varepsilon)}{h_{\mathfrak{f}}^*(K)} ,
\end{align*}
for
\begin{align*}
\log x\geq \left(23.148\sqrt{r_2}\left(1+\left(2\log\left(\frac{c_1\sqrt{r_2}}{0.117\varepsilon}\right)\right)^{\frac{1}{2}}+\frac{2}{3}\log\left(\frac{c_1\sqrt{r_2}}{0.117 \varepsilon}\right)\right)\right)^2.
\end{align*}
where
\begin{align*}
c_1 & =(36997.123|\Delta|^{\frac{1.933}{r_2}} + 19064.499|\Delta|^{\frac{1.289}{r_2}}(N\mathfrak{f})^{\frac{1}{r_2}}h_{\mathfrak{f}}^*(K))r_2^2\log(|\Delta|N\mathfrak{f}).
\end{align*}
\end{theorem}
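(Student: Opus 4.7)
The plan is to use the standard character-orthogonality approach. Writing $\psi(y,\chi) = \sum_{N\mathfrak{a} \leq y} \chi(\mathfrak{a})\Lambda(\mathfrak{a})$, the orthogonality of characters on $H \pmod{\mathfrak{f}}$ gives
\[
\Psi(x,X) = \frac{1}{h_{\mathfrak{f}}^*(K)} \sum_{\chi \pmod{\mathfrak{f}}} \overline{\chi}(X)\bigl(\psi(2x,\chi)-\psi(x,\chi)\bigr),
\]
so that the principal character $\chi_0$ supplies the main term $\approx x$ coming from the simple pole of $\zeta(s,\chi_0)$ at $s=1$, while the remaining characters must be shown to contribute errors that are jointly at most $\varepsilon x$.

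For each non-principal $\chi$, I would derive a truncated explicit formula
\[
\psi(y,\chi) = -\sum_{|\gamma|\leq T} \frac{y^{\rho}}{\rho} + R(y,T,\chi),
\]
via a Perron integral shifted past the critical line, with $R$ controlled explicitly using the functional equation for $\Phi(s,\chi)$ and the Stirling bound for the $\Gamma(s)^{r_2}$ factor. Combined with an explicit Landau-type zero-counting estimate of shape
\[
N(T+1,\chi) - N(T,\chi) \ll r_2\log(T+2) + \log(|\Delta|N\mathfrak{f}),
\]
and with the hypothesis \eqref{OBszarGL} (which forces $\beta \leq 1 - \eta(|\gamma|)$ for every zero $\rho = \beta + i\gamma$), the sum over zeros is bounded by an explicit multiple of $y^{1-\eta(T)}(\log yT)\log\bigl((T+1)^{2r_2}|\Delta|N\mathfrak{f}\bigr)$.

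Forming the dyadic difference $\psi(2x,\chi)-\psi(x,\chi)$ causes the smoothness errors of Perron to partially cancel. Choosing the truncation $T = T(x)$ to balance the $x/T$ remainder against the zero-sum bound $x^{1-\eta(T)}\log T$ — the optimal $T$ being of order $\exp(c\sqrt{\log x})$ — produces a character-by-character estimate of the form $c_1 \, x \exp(-0.117\sqrt{\log x}/\sqrt{r_2})$ with $c_1$ essentially as displayed in the statement. Summing over all characters (which introduces the factor $h_{\mathfrak{f}}^*(K)$ visible inside $c_1$) and requiring that the total error be bounded by $\varepsilon x$ gives a transcendental inequality for $\log x$; iterating it once to eliminate the self-reference of $\log x$ yields the explicit quadratic-in-logarithm threshold stated in the theorem.

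The hard part is purely quantitative: every step — the zero-density inequality, the truncated Perron formula with its edge contributions near the critical strip, the Stirling-type bound on $\Gamma$-quotients along the shifted contour, and the final transcendental inequality defining the admissible range of $x$ — has to be carried out with numerical rather than asymptotic constants, and the specific values $0.117$, $23.148$, $0.7761$ together with the two exponents $1.933/r_2$ and $1.289/r_2$ appearing in $c_1$ only arise from a rather delicate simultaneous optimisation of several competing error terms. The conceptual content is the classical de la Vallée Poussin argument adapted to Hecke--Landau $L$-functions; essentially all of the labour is in the bookkeeping.
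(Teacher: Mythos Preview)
Your explicit-formula strategy (sum over zeros controlled by a Landau zero-count and the zero-free-region hypothesis, then optimise $T$) is a legitimate route to a result of this shape, but it is not what the paper does. The paper works instead with the smoothed $\psi_1(x,X)=\int_0^x\psi(t,X)\,dt$, represents it by the absolutely convergent Perron integral with kernel $x^{s+1}/\bigl(s(s+1)\bigr)$, and shifts the contour only as far as the edge $\sigma=1-B/L(t)$ of the zero-free region (with $B=A_1/6$); no residues at nontrivial zeros are ever collected, and the entire error comes from the explicit bounds on $\bigl|\zeta'/\zeta(s,\chi)\bigr|$ and $\bigl|\zeta'/\zeta(s,\chi_0)+1/(s-1)\bigr|$ along that contour supplied by Lemma~\ref{PochodnaLogarytmicznaZeta}. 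The unsmoothed $\psi$ is then recovered by differencing $\psi_1$ over an interval of length $h=x\exp\bigl(-\tfrac12 c_{18}\sqrt{\log x}\bigr)$, giving Theorem~\ref{oszcaowanoeS2} with saving $\exp\bigl(-0.0432\,r_2^{-1/2}\sqrt{\log x}\bigr)$. What the paper's approach buys is precisely that no explicit zero-counting lemma for Hecke $L$-functions is needed, whereas your route would require one with numerical constants. Two specific points in your write-up also need correction: the exponential decay rate is $0.0432\,r_2^{-1/2}$, not $0.117\,r_2^{-1/2}$; and the passage from $c_1(\log x)^{1/2}e^{-0.0432\,r_2^{-1/2}\sqrt{\log x}}\le\varepsilon$ to the stated threshold on $\log x$ is carried out not by an ad hoc iteration but by inverting $u e^{-u}\le 0.0432\,\varepsilon/(c_1\sqrt{r_2})$ through the lower branch $W_{-1}$ of the Lambert $W$ function and the explicit bound of \cite[Theorem~1]{LambertW}, which is exactly where the numbers $23.148\approx 1/0.0432$ and $0.117\approx 0.0432\,e$ in the statement originate.
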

\begin{remark}
For real $\chi \pmod{\mathfrak{f}}$ may exist in (\ref{OBszarGL}) one zero of $\zeta(s,\chi)$, which may be real and simple. However,  we can check numerically that a Hecke-Landau $\zeta$ function has a simple real zero
in (\ref{OBszarGL}) using scripts for computing zeros zeta functions associated to characters of finite order \cite{MaciejR}.
\end{remark}

Explicit results of this kind are useful  for estimating of computational complexity of algorithms which generates special primes. Such primes  can be used in computational number theory and cryptography.
In order to calculate exactly the time of  the algorithms one need an explicit bound for the number of desired primes from the interval $[x, 2x]$, $x\geq x_0$, where $x_0$ is computed  explicitly.
We give an example of such an algorithm. For this reason  we recall the definition  \cite{grzeskowiakECC}.
\begin{definition}
Let $p, q$ be a pair of primes and $\Delta < 0$. The primes $p, q$ are defined to be  CM-primes with respect to $\Delta$ if there exist integers $f$ and $t$ such that
\begin{eqnarray}\label{warunkistartowe}
|t| \leq 2\sqrt{p}, \quad q |p+1-t, \quad 4p-t^2=\Delta f^2.
\end{eqnarray}
\end{definition}
If CM-primes $p$ and $q$ with respect to $\Delta$ and integers $f, t$ are given, then an ordinary elliptic curve $E$ over $\mathbb{F}_p$ of cardinality $p+1-t$ can be constructed using complex multiplication method \cite{atkin}, \cite{dupont}. Let $E(\mathbb{F}_p)$ be the group of points on $E$  over $\mathbb{F}_p$, and let  $|E(\mathbb{F}_p)|$ be the order of $E(\mathbb{F}_p)$.
The group $E(\mathbb{F}_p)$ can  be used to implement public key cryptographic systems, based on intractability of the discrete logarithm problem (DLP).
To make the DLP in $E(\mathbb{F}_p)$ intractable, it is
essential to generate a large prime $p$, and a curve $E$ defined over $\mathbb{F}_p$, such that $|E(\mathbb{F}_p)|$  has a large prime factor $q$.
In  \cite{grzeskowiakECC} an algorithmic method for constructing a pair $(E, p)$ such that $|E(\mathbb{F}_p)|$ has a large prime factor $q$ is given.
Fix $K$ an imaginary quadratic number field, and positive integers $m, n$, $(n,m)=1$. Then the algorithm generates $\alpha  \in \mathcal{O}_K$ such that $q=N_{K/\mathbb{Q}}(\alpha) \equiv m \pmod n$ is a prime, and
$x\leq q \leq 2x$ for sufficiently large $x \geq x_0$. Given $\alpha, q$ a prime $p$, $x < p < x^{\frac{5}{2-5\varepsilon}}$, is constructed, where  $0<\varepsilon < \frac{2}{5}$.  For more algorithms of this kind we refer the reader to \cite{Relatively}, \cite{Pairing-friendly}.

Let $x \in \mathbb{R}$, and let $W(x)$ be the Lambert $W$ function such that $W(x)e^{W(x)}=x$. If $-e^{-1} \leq x \leq 0$, then there are two possible real values of $W(x)$. We denote the branch satisfying $-1 < W(x)$ by
$W_0(x)$ and the branch satisfying $W(x) \leq -1$ by $W_{-1}(x)$.
Fix $X \pmod{ \mathfrak{f}} \in H$. We define
\begin{align*}
\psi(x, X) : =\sum_{\stackrel{\scriptstyle N\mathfrak{p}^m < x }{\mathfrak{p}^m \in X}}\log N \mathfrak{p},
\end{align*}
where $\mathfrak{p}$ runs through prime ideals of $\mathcal{O}_K$. Theorem \ref {mainTheorem} follows from Theorem \ref{oszcaowanoeS2}.
\begin{theorem}\label{oszcaowanoeS2}
Let $K$, $\Delta$, $\mathfrak{f}$, $\zeta(s,\chi)$ denote respectively any algebraic number field of degree $[K : \mathbb{Q}] =2r_2$, the discriminant of $K$, any ideal in $K$ and any Hecke-Landau zeta function with
a character $\chi$ modulo $\mathfrak{f}$. Let $A_0=0.7761$. If $|\Delta|\geq 9$ and there is no zero in the region
\begin{align*}
\sigma\geq 1-0.0795\left(\log |\Delta| + A_0\log \left((|t|+1)^{2r_2}(N\mathfrak{f})^{1-E_0(\chi)}\right)\right)^{-1},
\end{align*}
then
\begin{align*}
& \psi(x, X) \geq \frac{x}{h_{\mathfrak{f}}^*(K)} - \frac{c_2x}{h_{\mathfrak{f}}^*(K)}(\log x)^{\frac{1}{2}}e^{-0.0432r_2^{-1/2}\sqrt{ \log x }},
\end{align*}
and
\begin{align*}
& \psi(x, X)  \leq \frac{x}{h_{\mathfrak{f}}^*(K)} + \frac{c_3x}{h_{\mathfrak{f}}^*(K)}(\log x)^{\frac{1}{2}}e^{-0.0459r_2^{-1/2}\sqrt{ \log x }}
\end{align*}
for $x \geq \exp\left( 116r_2 \log \left(2|\Delta|^{\frac{1}{A_0r_2}}(N\mathfrak{f})^{\frac{1}{r_2}}\right)\right)$,
where
\begin{align*}
c_2 & =(10756.659|\Delta|^{\frac{3}{2A_0r_2}} + 5541.374|\Delta|^{\frac{1}{A_0r_2}}(N\mathfrak{f})^{\frac{1}{r_2}}h_{\mathfrak{f}}^*(K))r_2^2\log(|\Delta|N\mathfrak{f}),\\
c_3 & =(14665.542|\Delta|^{\frac{3}{2A_0r_2}} + 7555.065|\Delta|^{\frac{1}{A_0r_2}}(N\mathfrak{f})^{\frac{1}{r_2}}h_{\mathfrak{f}}^*(K))r_2^2\log(|\Delta|N\mathfrak{f}).
\end{align*}
\end{theorem}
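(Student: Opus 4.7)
The proof will combine orthogonality of characters with an explicit-formula argument for Hecke--Landau $L$-functions. By orthogonality on the group of ideal classes modulo $\mathfrak{f}$,
\[
\psi(x, X) \;=\; \frac{1}{h_\mathfrak{f}^*(K)} \sum_{\chi} \overline{\chi}(X)\, \psi(x, \chi), \qquad \psi(x,\chi) := \sum_{N\mathfrak{a}\leq x}\chi(\mathfrak{a})\Lambda(\mathfrak{a}),
\]
so it is enough to prove, for every $\chi \pmod{\mathfrak{f}}$, an estimate of the form $\psi(x,\chi) = E_0(\chi)\,x + \mathcal{E}(x,\chi)$ with $\mathcal{E}$ of the explicit shape of the right-hand side of the theorem. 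Only the principal character produces a pole (at $s=1$, giving the main term $x/h_\mathfrak{f}^*(K)$ after summation); every non-principal $\chi$ must be absorbed entirely into the error.

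The central tool is a truncated Perron representation
\[
\psi(x,\chi) \;=\; -\frac{1}{2\pi i}\int_{b-iT}^{b+iT}\frac{\zeta'(s,\chi)}{\zeta(s,\chi)}\,\frac{x^s}{s}\,ds \;+\; R(x,T,\chi),
\]
with $b=1+1/\log x$ and an explicit truncation remainder $R$. I would then shift the contour to the line $\sigma = 1 - \tfrac{1}{2}\eta(T)$, where $\eta(T)$ is the width of the zero-free region at height $T$ provided by the hypothesis. Because no nontrivial zero lies to the right of this new contour, the shift yields only the residue at $s=1$ (for $\chi_0$) and the contribution at $s=0$, together with integrals over the vertical line and two horizontal segments. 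Bounding these integrals requires two explicit ingredients: a Riemann--von Mangoldt estimate $N(T,\chi) \ll r_2 T\log T + \log(|\Delta|N\mathfrak{f})$ obtained from the Hadamard factorisation of $\Phi(s,\chi)$ via Jensen's formula, and an explicit bound for $|\zeta'(s,\chi)/\zeta(s,\chi)|$ on the shifted contour, obtained by combining that Hadamard product with a Borel--Carath\'eodory three-circles argument on discs whose radii are tuned to $|\Delta|$, $N\mathfrak{f}$ and $r_2$.

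The final step is the optimisation of $T$ as a function of $x$. Balancing the contour contribution $\sim x^{1-\eta(T)/2}(\log T)^2$ against the truncation error $\sim x/T$, and inserting the explicit shape $\eta(T) = 0.0795\big(\log|\Delta| + 2A_0 r_2 \log(T+1) + (1-E_0)A_0\log N\mathfrak{f}\big)^{-1}$, forces the near-optimum $\log T \asymp \sqrt{\log x / r_2}$, which converts the multiplicative saving into $\exp(-c\,r_2^{-1/2}\sqrt{\log x})$; the distinct numerical constants $0.0432$ and $0.0459$ reflect the fact that for the lower bound a slice of $\eta(T)$ has to be reserved to keep the main term strictly positive. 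The genuinely hard part will be the explicit bookkeeping: carrying every constant through the Hadamard, Borel--Carath\'eodory, Riemann--von Mangoldt, and contour-integration steps sharply enough to produce exactly the prefactors $|\Delta|^{3/(2A_0 r_2)}$ and $|\Delta|^{1/(A_0 r_2)}(N\mathfrak{f})^{1/r_2}h_\mathfrak{f}^*(K)$ and the four-digit numerical coefficients appearing in $c_2$ and $c_3$. A cruder choice of discs in the Borel--Carath\'eodory step, or a looser truncation of $R(x,T,\chi)$, would preserve the qualitative exponent but destroy these explicit numerics; matching them is where the bulk of the technical effort will go.
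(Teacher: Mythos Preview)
Your overall architecture—orthogonality, a Perron-type formula, contour shift into the zero-free region, a Borel--Carath\'eodory bound for $\zeta'/\zeta$, and the optimisation $\log T\asymp\sqrt{(\log x)/r_2}$—matches the paper. But the paper does \emph{not} run a truncated Perron for $\psi(x,\chi)$ directly. Instead it first proves an explicit formula for the \emph{integrated} function
\[
\psi_1(x,X)=\int_0^x\psi(t,X)\,dt=\sum_{n\le x}(x-n)\gamma(n),
\]
using the absolutely convergent kernel $x^{s+1}/(s(s+1))$, so there is no truncation remainder $R(x,T,\chi)$ to track; the tails $\int_{c\pm iT}^{c\pm i\infty}$ are bounded directly via explicit pointwise estimates for $|\zeta'/\zeta|$ (their Lemma on $\phi_0,\phi$). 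The contour is shifted only to $\sigma=1-\tfrac{A_1}{6L(T)}$, well to the right of $0$, so there is no residue at $s=0$ and no Riemann--von Mangoldt count is invoked: a single Borel--Carath\'eodory lemma gives $|\zeta'/\zeta|$ on the whole contour.

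The point you should correct is your explanation of the two exponents $0.0432$ and $0.0459$. They do \emph{not} come from ``reserving a slice of $\eta(T)$''. After bounding $|\psi_1(x,X)-x^2/(2h^*_\mathfrak{f})|$, the paper recovers $\psi$ by the standard differencing
\[
\psi(x,X)\ \ge\ \frac{\psi_1(x,X)-\psi_1(x-h,X)}{h},\qquad
\psi(x,X)\ \le\ \frac{\psi_1(x+h,X)-\psi_1(x,X)}{h},
\]
with $h=x\exp(-\tfrac12 c_{18}\sqrt{\log x})$. The upper bound keeps the full $\tfrac12 c_{18}$ in the exponent, while in the lower bound the shift $x\mapsto x-h$ forces the slightly weaker $0.47\,c_{18}$; numerically $c_{18}=0.0919\,r_2^{-1/2}$, giving $0.0459$ and $0.0432$. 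If you carry out your direct-Perron plan you will not see this asymmetry in the same place, and matching the paper's four-digit constants $c_2,c_3$ would require redoing the bookkeeping from scratch rather than reproducing theirs.
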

\begin{proof}
See Section \ref{SekcjaTH2}.
\end{proof}
We are now in a position to prove Theorem \ref{mainTheorem}
\begin{proof}
By Theorem \ref{oszcaowanoeS2} we have
\begin{align*}
& \psi(2x, X)- \psi(x, X) \geq \frac{x}{h_{\mathfrak{f}}^*(K)} - \frac{c_1x}{h_{\mathfrak{f}}^*(K)}(\log x)^{\frac{1}{2}}e^{-0.0432r_2^{-1/2}\sqrt{\log x}},
\end{align*}
where
\[
c_1=\left(2c_2\left(1+\frac{\log{2}}{\log x}\right)^{\frac{1}{2}} + c_3\right)\leq 2.077c_2+c_3
\]
for $x \geq \exp\left( 116r_2 \log \left(2|\Delta|^{\frac{1}{A_0r_2}}(N\mathfrak{f})^{\frac{1}{r_2}}\right)\right)$.
Fix $0< \varepsilon < 1$. If
\begin{align*}
c_1(\log x)^{\frac{1}{2}}e^{-0.0432r_2^{-1/2}\sqrt{\log x}}\leq\varepsilon,
\end{align*}
then
\begin{align*}
0.0432r_2^{-\frac{1}{2}}\left(\log x\right)^{\frac{1}{2}}\geq - W_{-1}\left(\frac{-0.0432\varepsilon}{c_1\sqrt{r_2}}\right).
\end{align*}
By \cite[ Theorem 1]{LambertW}
\begin{align*}
\log x\geq \left(23.148\sqrt{r_2}\left(1+\left(2\log\left(\frac{c_1\sqrt{r_2}}{0.117\varepsilon}\right)\right)^{\frac{1}{2}}+\frac{2}{3}\log\left(\frac{c_1\sqrt{r_2}}{0.117 \varepsilon}\right)\right)\right)^2.
\end{align*}
This finishes the proof.
\end{proof}

\section{The proof of Theorem \ref{oszcaowanoeS2}}\label{SekcjaTH2}
The proof  of Theorem \ref{oszcaowanoeS2}  rests on the following lemmas and theorems.
\begin{theorem}\label{Fryska_obszr_wolnyodZer}
Let $K$, $\mathfrak{f}$, $\zeta(s,\chi)$ denote respectively any algebraic number field of degree $n\geq 2$, any ideal in $K$ and any Hecke-Landau zeta function with
a character $\chi$ modulo $\mathfrak{f}$. Let futher
\begin{align}\label{sta쓰L}
L(t)=\log |\Delta| + A_0\log\left((|t|+1)^n(N\mathfrak{f})^{1-E_0}\right)\geq 2.097.
\end{align}
Then in the case of the complex $\chi$ in the region
\begin{eqnarray}\label{obszar_wolny_od_zer}
\sigma\geq 1-\frac{A_1}{L(t)}\geq 1- 0.037911=0.962088=A_{2}
\end{eqnarray}
there is no zero of $\zeta(s,\chi)$, where  $A_0=0.7761$, $A_1=0.0795$. For real $\chi \pmod{\mathfrak{f}}$ they may exist in (\ref{obszar_wolny_od_zer}) one zero of $\zeta(s,\chi)$, which may be real and simple.
\end{theorem}
\begin{proof}
See \cite[Th. 2]{fryskaObszar}.
\end{proof}
\begin{lemma}\label{PochodnaLogarytmicznaZeta}
Let $s=\sigma +it$, $0< \eta \leq \frac{1}{4}$,  $A_3=75.472$, $A_4=0.010$ and $|\Delta|\geq 9$. Assume that there is no exceptional zero in the region (\ref{obszar_wolny_od_zer})
Then  in the strip
$1-\frac{A_1}{6L(t)}\leq \sigma \leq 3$ we have.
\begin{align*}
& \left|\frac{\zeta'}{\zeta}(s,\chi_0) + \frac{1}{s-1}\right|  \leq  \phi_0(t, r_2, \eta, \Delta, \mathfrak{f}),
\end{align*}
where
\begin{align}
\begin{aligned}\label{phiZero}
& \phi_0(t,r_2, \eta, \Delta, \mathfrak{f} )  = 32 \log \left(L(t)(|t|+4)(|t|+2)^{r_2(1+\eta)}\left(1+A_3L(t)\right)^{2r_2}\right)\\
& \qquad + 32 \log \left(A_3(|\Delta| N\mathfrak{f})^{\frac{1+\eta}{2}}\zeta(1+\eta)^{2r_2}\right) + 8A_3r_2L(t) + \frac{A_4r_2}{L(t)},
\end{aligned}
\end{align}
and
\begin{align*}
\left|\frac{\zeta'}{\zeta}(s,\chi)\right| & \leq \phi(t, r_2, \eta, \Delta, \mathfrak{f}),
\end{align*}
where
\begin{align}
\begin{aligned}\label{phiNotZero}
& \phi(t, r_2, \eta, \Delta, \mathfrak{f})  = 32\log\left(\left(1+A_3L(t)\right)^{2r_2}(|t|+2)^{r_2(1+2\eta)}\right)\\
& \qquad + 32\log\left(1.4 (1+\varepsilon_\chi)A(\mathfrak{f})^{1+2\eta} \zeta(1+\eta)^{2r_2}\right)+  4A_3r_2 L(t) + \frac{A_4r_2}{L(t)}
\end{aligned}
\end{align}
for any character $\chi\neq \chi_0$ modulo $\mathfrak{f}$, where $\varepsilon_\chi=0$ or 1 to accordingly whether $\chi$ is primitive or not.
\end{lemma}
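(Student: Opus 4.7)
The plan is to combine the Hadamard factorization of the completed Hecke--Landau function $\Phi(s,\chi)$ with a reference-point comparison argument on the vertical line $\sigma=1+\eta$.

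First, I take logarithmic derivatives in the Hadamard factorization
$$\Phi(s,\chi) = e^{a(\chi)+b(\chi)s}\prod_{\rho}\left(1-\frac{s}{\rho}\right)e^{s/\rho}$$
(entire for non-principal primitive $\chi$; multiply by $s(s-1)$ first in the case $\chi=\chi_0$ to remove the pole at $s=1$). Using $\log\Phi(s,\chi)=s\log A(\mathfrak{f})+r_2\log\Gamma(s)+\log\zeta(s,\chi)$ gives
$$\frac{\zeta'}{\zeta}(s,\chi)+\frac{E_0}{s-1}+\frac{E_0}{s}=-\log A(\mathfrak{f})-r_2\frac{\Gamma'}{\Gamma}(s)+b(\chi)+\sum_{\rho}\left(\frac{1}{s-\rho}+\frac{1}{\rho}\right).$$
For imprimitive $\chi$ I would first pass to the primitive character inducing $\chi$; the extra Euler factors at the primes dividing $\mathfrak{f}$ are what yield the $(1+\varepsilon_\chi)$ in $\phi$.

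Next, I evaluate the same identity at the reference point $s_0=1+\eta+it$ and subtract. The unknown Hadamard constant $b(\chi)$ and the $\sum 1/\rho$ piece cancel, leaving only a difference of gamma logarithmic derivatives plus the zero sum $\sum_{\rho}\bigl(\frac{1}{s-\rho}-\frac{1}{s_0-\rho}\bigr)$. On $\sigma=1+\eta$ the Dirichlet series for $\zeta'/\zeta$ converges absolutely, so term-by-term bounding against the prime-ideal expansion produces a bound of shape $2r_2\zeta(1+\eta)^{2r_2}$; this is the source of the $\zeta(1+\eta)^{2r_2}$ appearing inside the logarithm in (\ref{phiZero}) and (\ref{phiNotZero}). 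Stirling's formula applied with an explicit remainder to $r_2\bigl(\Gamma'/\Gamma(s)-\Gamma'/\Gamma(s_0)\bigr)$ supplies the $r_2(1+\eta)\log(|t|+2)$ and $\log(|t|+4)$ pieces, and the factors $A(\mathfrak{f})^{1+2\eta}$ and $(|\Delta|N\mathfrak{f})^{(1+\eta)/2}$ come out of $\log A(\mathfrak{f})$ together with these gamma differences.

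The central step is estimating the zero sum. I split $\rho=\beta+i\gamma$ by $|\gamma-t|\leq 1$ and $|\gamma-t|>1$. A Riemann--von Mangoldt-type counting bound $N(T+1,\chi)-N(T,\chi)=O(L(T))$ combined with partial summation handles the far zeros and yields an $O(r_2 L(t))$ contribution. For near zeros, the hypothesis that (\ref{obszar_wolny_od_zer}) is zero-free (and no exceptional real zero occurs) forces $\beta\leq 1-A_1/L(\gamma)$; on the strip $\sigma\geq 1-A_1/(6L(t))$ each denominator is then at least of order $A_1/L(t)$, and the number of such near zeros is $O(L(t))$, producing the leading term $8A_3 r_2 L(t)$. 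The contrast between the coefficient $8$ in (\ref{phiZero}) and $4$ in (\ref{phiNotZero}) reflects the extra contribution of the pole of $\zeta(s,\chi_0)$ at $s=1$, isolated by the $1/(s-1)$ shift on the left-hand side in the principal case.

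The main obstacle is the explicit bookkeeping of constants. Each step---Stirling's remainder, a Borel--Carath\'eodory-type conversion of the zero-free region into a pointwise bound on $|\Phi'/\Phi|$, the Riemann--von Mangoldt counting---must be carried out with sharp numerical control so that exactly the coefficients $32$, $A_3=75.472$, $A_4=0.010$ and the constant $1.4$ emerge in the final expressions. The delicate choice of auxiliary radii in the Borel--Carath\'eodory step and the explicit (rather than asymptotic) handling of $\Gamma'/\Gamma$ are where essentially all of the numerical work lives.
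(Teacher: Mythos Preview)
Your route via the global Hadamard product and subtraction at a reference point on $\sigma=1+\eta$ is genuinely different from the paper's argument. The paper works entirely locally: it fixes $s_0=\sigma_0+it_0$ with $\sigma_0=1+B/L(t_0)$, $B=A_1/6$, removes only the zeros in the disk $|s-s_0|\le\tfrac12$ by setting $g(s)=h(s,\chi)\prod_{\rho}(s-\rho)^{-1}$ (with $h=\zeta$ or $(s-1)\zeta$), and never touches the full Hadamard product or any Riemann--von Mangoldt count. Convexity bounds for $|\zeta(s,\chi)|$ on $-\eta\le\sigma\le 3$, obtained from the functional equation and Phragm\'en--Lindel\"of, control $|g(s)/g(s_0)|$ on $|s-s_0|=1$; applying the Borel--Carath\'eodory lemma to $\log(g/g(s_0))$ with radii $R=\tfrac12$, $r=\tfrac14$ gives the factor $16$. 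The local zero sum is handled by the elementary comparison $\bigl|\sum(s-\rho)^{-1}\bigr|\le 2\bigl|\sum(s_0-\rho)^{-1}\bigr|$, which doubles $16$ to $32$. The reference value $|\zeta'/\zeta(s_0,\chi)|$ is bounded by $-2r_2\,\zeta'/\zeta(\sigma_0)$ via an explicit Laurent bound of Israilov, giving $2r_2/B$ and $2r_2C_1B$; this is exactly where $A_3=1/B=75.472$ and $A_4=4C_1B\approx0.010$ come from.

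Your method could in principle produce a bound of the same shape, but several of your attributions are off. The factor $\zeta(1+\eta)^{2r_2}$ is not a bound on the Dirichlet series of $\zeta'/\zeta$ at the reference point; it enters through the convexity bound on $|\zeta(s,\chi)|$ itself. The constant $1.4$ is not an imprimitivity artifact but the numerical constant in $|\Gamma(1-s)/\Gamma(s)|\le 1.4\max(1,|s|^{1+2\eta})$ used in that convexity step. And your global zero-sum estimate would require an explicit Riemann--von Mangoldt bound for Hecke $L$-functions with all constants tracked, whereas the paper avoids this entirely; it is very unlikely your route would reproduce the specific coefficients $32$, $A_3$, $A_4$ that fall directly out of the local-disk argument, as your final paragraph effectively concedes.
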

\begin{proof}
See Section \ref{Dowod_PochodnaLogarytmicznaZeta}
\end{proof}
\begin{lemma}\label{PodstawianieXzaT}
Let $\phi_0$, $\phi$ be functions defined in Lemma \ref{PochodnaLogarytmicznaZeta}. Let $T \geq 1$, $w\geq 1$, $|\Delta|\geq 9$, $c_4=\frac{1}{\sqrt{2wr_2}}$ and
\begin{align}\label{StalaC0}
c_0=c_0(\Delta, \mathfrak{f}, r_2, E_0)=|\Delta|^{-\frac{1}{2A_0r_2}}(N\mathfrak{f})^{-\frac{1-E_0}{2r_2}}
\end{align}
If
\begin{align}\label{PodstawienieTplus1}
T+1=c_0\exp\left(c_4\sqrt{\log x}\right),
\end{align}
then
\begin{align*}
\phi(T, r_2,\eta,\Delta, \mathfrak{f}) & \leq   230.911  r_2^{\frac{3}{2}}\log(|\Delta|N\mathfrak{f})(\log x)^\frac{1}{2}\\
\phi_0(T, r_2,\eta,\Delta, \mathfrak{f}) & \leq  412.531 r_2^{\frac{3}{2}}\log(|\Delta|N\mathfrak{f})(\log x)^\frac{1}{2}
\end{align*}
for $x \geq \exp\left(({c_4^{-1} \log (2c_0^{-1})})^2\right)$.
\end{lemma}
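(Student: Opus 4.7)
The plan is a direct substitution of $T+1 = c_0\exp(c_4\sqrt{\log x})$ into the formulas \eqref{phiZero} and \eqref{phiNotZero}, followed by a term-by-term estimation against the claimed bounds.

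First I would verify the identity that the substitution is engineered to produce. Using the definition of $c_0$ in \eqref{StalaC0} together with $L(t)=\log|\Delta|+2r_2 A_0\log(|t|+1)+A_0(1-E_0)\log N\mathfrak{f}$, one checks directly that
\begin{equation*}
L(T) = 2r_2 A_0 c_4 \sqrt{\log x}.
\end{equation*}
This identity collapses the dominant terms of $\phi$ and $\phi_0$, namely $4A_3 r_2 L(T)$ and $8A_3 r_2 L(T)$, to the clean form $8A_0 A_3 r_2^2 c_4\sqrt{\log x}$ and $16A_0 A_3 r_2^2 c_4\sqrt{\log x}$. Since $w\geq 1$ gives $c_4\leq 1/\sqrt{2r_2}$, these are bounded by $4\sqrt{2}A_0 A_3 r_2^{3/2}\sqrt{\log x}$ and $8\sqrt{2}A_0 A_3 r_2^{3/2}\sqrt{\log x}$, respectively.

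Next I would exploit the hypothesis $x\geq \exp((c_4^{-1}\log(2c_0^{-1}))^2)$. It rewrites as $c_4\sqrt{\log x} \geq \log(2c_0^{-1})$ and yields (a) $T+1\geq 2$, hence $T\geq 1$ in accordance with the assumption; (b) $\log(T+2) \leq \log 2 + c_4\sqrt{\log x}$; (c) a lower bound on $\sqrt{r_2\log x}$ in terms of $\log|\Delta|$ and $\log N\mathfrak{f}$, which is the key device for converting auxiliary logarithmic terms into pieces of the target expression $r_2^{3/2}\log(|\Delta|N\mathfrak{f})\sqrt{\log x}$.

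Then I would estimate the remaining pieces of $\phi$ one by one: the $64r_2\log(1+A_3 L(T))$ contribution is of order $r_2\log(r_2\sqrt{\log x})$; the $32r_2(1+2\eta)\log(T+2)$ piece is $O(\sqrt{r_2}\sqrt{\log x})$ via (b); the $32(1+2\eta)\log A(\mathfrak{f})$ term is at most $24\log(|\Delta|N\mathfrak{f})$ since $A(\mathfrak{f})=(2\pi)^{-r_2}\sqrt{|\Delta|N\mathfrak{f}}$ and the negative $-r_2\log(2\pi)$ part may be discarded; the $64r_2\log\zeta(1+\eta)$ term and $A_4 r_2/L(T)$ are handled trivially using $\eta\leq 1/4$ and $L(T)\geq 2.097$. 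Each auxiliary contribution is then absorbed into $r_2^{3/2}\log(|\Delta|N\mathfrak{f})\sqrt{\log x}$ via $|\Delta|\geq 9$, so $\log(|\Delta|N\mathfrak{f})\geq \log 9\approx 2.197$. The leading constant $4\sqrt{2}A_0 A_3/\log 9\approx 150.8$ plus the auxiliary contributions should stay below $230.911$. The estimate for $\phi_0$ is entirely analogous, with leading constant $8\sqrt{2}A_0 A_3/\log 9\approx 301.6$ and headroom up to $412.531$.

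The main obstacle is the bookkeeping: every subleading logarithmic term must be bounded uniformly in $r_2$, $\eta\in(0,1/4]$, $w\geq 1$, $\Delta$, and $\mathfrak{f}$, and then absorbed into $r_2^{3/2}\log(|\Delta|N\mathfrak{f})\sqrt{\log x}$ without exceeding the available headroom of roughly $80$ for $\phi$ and $110$ for $\phi_0$. No single step is technically deep; the difficulty lies entirely in keeping the numerical constants within the claimed values, which is why the lower bound $|\Delta|\geq 9$ (creating the factor $\log 9$ of headroom) and the precise form of the substitution (making the dominant $4A_3 r_2 L(T)$ term exactly proportional to $r_2^{3/2}\sqrt{\log x}$) are both critical.
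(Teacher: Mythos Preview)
Your approach is essentially the same as the paper's: derive the identity $L(T)=2A_0 r_2 c_4\sqrt{\log x}$, use $w\geq 1$ to reduce the leading term to a multiple of $r_2^{3/2}\sqrt{\log x}$, and then absorb all subleading pieces into $r_2^{3/2}\log(|\Delta|N\mathfrak{f})\sqrt{\log x}$ via $\log(|\Delta|N\mathfrak{f})\geq \log 9$ and the lower bound $\log x\geq 8.892$. One correction: you cannot keep the estimate uniform in $\eta\in(0,1/4]$, since $\zeta(1+\eta)\to\infty$ as $\eta\to 0^+$ and the term $64r_2\log\zeta(1+\eta)$ blows up; the paper simply fixes $\eta=1/4$ (so $\zeta(5/4)\leq 4.596$), which is all that is needed downstream.
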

\begin{proof}
Since $T+1 \geq 2$, $\log x \geq  2r_2w \left( \log (2c_0^{-1})\right)^2 \geq 2(\log( 2 \cdot 9^\frac{1}{2A_0}))^2=8.892$, and hence $x > e^{8.892}$.
By (\ref{sta쓰L}), (\ref{PodstawienieTplus1}) we obtain
\begin{align}\label{PodstawienieLdoT}
L(T)=\frac{A_0\sqrt{2r_2}}{\sqrt{w}}(\log x)^{\frac{1}{2}}, \quad L(T)\geq 2.097.
\end{align}
Let $|\Delta| \geq 9$, $x > e^{8.892}$ and $\eta=\frac{1}{4}$. We have $\zeta\left(\frac{5}{4}\right)\leq 4.596$,
\begin{align*}
32\log(1+A_3L(T))^{2r_2}& \leq 32r_2\log\log x  + 64r_2\log(A_0\sqrt{2r_2}(A_3+\frac{1}{2.097}))\\
&\leq 111.419r_2^{\frac{3}{2}}\log\log x
\end{align*}
\begin{align*}
32r_2(1+2\eta)\log(T+2) \leq \frac{32(1+2\eta)\sqrt{r_2}}{\sqrt{2}}(\log x)^{\frac{1}{2}} \leq 37.336r_2^{\frac{1}{2}}(\log x)^{\frac{1}{2}},
\end{align*}
\begin{align*}
4A_3r_2L(T)\leq 4 \sqrt{2} A_0A_3r_2^{\frac{3}{2}}(\log x)^{\frac{1}{2}} \leq 331.334r_2^{\frac{3}{2}}(\log x)^{\frac{1}{2}},
\end{align*}
\begin{align*}
& 32\log\left(1.4 (1+\varepsilon_\chi)A(\mathfrak{f})^{1+2\eta} \zeta(1+\eta)^{2r_2}\right)+ \frac{A_4r_2}{L(T)} \leq 32\log\left(2.8\zeta(1+\eta)^{2r_2}\right)\\
& \qquad +16(1+2\eta)\log(|\Delta|N \mathfrak{f}) +  \frac{A_4r_2}{2.097}\leq 73.419r_2\log(|\Delta|N\mathfrak{f}).
\end{align*}
By the above and (\ref{phiNotZero}) we obtain
\begin{align*}
\phi(T, r_2,\eta,\Delta, \mathfrak{f})  \leq 230.911  r_2^{\frac{3}{2}}\log(|\Delta|N\mathfrak{f})(\log x)^\frac{1}{2},
\end{align*}
Similarly,
\begin{align*}
32\log L(T)\leq 16 \log\log x + 32\log(\sqrt{2r_2}A_0)\leq24.686r_2^{\frac{1}{2}}\log\log x,
\end{align*}
\begin{align*}
32\log (T+4)^{r_2(1+\eta)+1}\leq56.3r_2(\log x)^{\frac{1}{2}},
\end{align*}
and
\begin{align*}
32 \log \left(A_3(|\Delta| N\mathfrak{f})^{\frac{1+\eta}{2}}\zeta(1+\eta)^{2r_2}\right)  + \frac{A_4r_2}{L(t)} \leq 84.361r_2\log(|\Delta|N\mathfrak{f}).
\end{align*}
By the above and (\ref{phiZero}) we obtain
\begin{align*}
\phi_0(T, r_2,\eta,\Delta, \mathfrak{f}) \leq 412.531  r_2^{\frac{3}{2}}\log(|\Delta|N\mathfrak{f})(\log x)^\frac{1}{2}.
\end{align*}
This finishes the proof.
\end{proof}
\begin{lemma}\label{PodstawianieXzaT2}
 Let $T \geq 1$, $w\geq 1$, $|\Delta|\geq 9$, and let $k\geq 1$. Let $c_0$ be  the constant appearing in (\ref{StalaC0}).
If
\begin{align}\label{PodstawienieTplus12}
T+1=c_0\exp\left(\sqrt{\frac{\log x}{2wr_2}}\right),
\end{align}
then
\begin{align}
\frac{1}{T^k} \leq 2^k c_0^{-k}e^{-k c_4\sqrt{\log x}} \qquad \mbox{for} \qquad  \log x \geq  (c_4^{-1}\log(2 c_0^{-1}))^2,
\end{align}
\begin{align}\label{LogTPLUS1}
\log(e(T+k)) \leq c_4\sqrt{\log x} + \log\left(e\left(\frac{k+1}{2}\right)\right).
\end{align}
\end{lemma}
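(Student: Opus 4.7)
The plan is to peel off the two bounds in turn; both follow mechanically from the definition $T+1=c_0\exp(c_4\sqrt{\log x})$ together with the easily verified fact that $c_0\le 1$ (since $|\Delta|\ge 9$ and $N\mathfrak{f}\ge 1$, while the exponents $-\frac{1}{2A_0r_2}$ and $-\frac{1-E_0}{2r_2}$ are non-positive).

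For the first inequality, I would first translate the hypothesis $\log x\ge (c_4^{-1}\log(2c_0^{-1}))^2$ into the multiplicative form $c_0\exp(c_4\sqrt{\log x})\ge 2$, so that $T=c_0\exp(c_4\sqrt{\log x})-1\ge \tfrac12 c_0\exp(c_4\sqrt{\log x})$. Inverting gives
\begin{equation*}
\frac{1}{T}\le \frac{2}{c_0}\exp\bigl(-c_4\sqrt{\log x}\bigr),
\end{equation*}
and raising to the $k$-th power produces the stated bound.

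For the second inequality I would rewrite $T+k=c_0\exp(c_4\sqrt{\log x})+(k-1)$ and aim to show
\begin{equation*}
T+k\le \frac{k+1}{2}\exp\bigl(c_4\sqrt{\log x}\bigr),
\end{equation*}
since taking logarithms and adding $1=\log e$ then gives \eqref{LogTPLUS1}. Rearranging, the target becomes $k-1\le\bigl(\tfrac{k+1}{2}-c_0\bigr)\exp(c_4\sqrt{\log x})$. Using $c_0\le 1$, we have $\tfrac{k+1}{2}-c_0\ge\tfrac{k-1}{2}$, so it suffices that $\exp(c_4\sqrt{\log x})\ge 2$. But this is forced by the standing assumption $T\ge 1$: it gives $c_0\exp(c_4\sqrt{\log x})\ge 2$, hence $\exp(c_4\sqrt{\log x})\ge 2/c_0\ge 2$. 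The case $k=1$ is trivial since both sides reduce to zero.

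There is no real obstacle here; the argument is essentially two lines once one notices that $c_0\le 1$ and that the quantitative assumption on $\log x$ (respectively on $T$) is exactly what is needed to absorb the $-1$ in $T=c_0\exp(c_4\sqrt{\log x})-1$ by a factor of $2$. The only point requiring a modicum of care is keeping track of whether the extra hypothesis $\log x\ge (c_4^{-1}\log(2c_0^{-1}))^2$ is actually needed for \eqref{LogTPLUS1}; as shown above, the weaker assumption $T\ge 1$ already suffices.
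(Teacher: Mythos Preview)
Your argument is correct and, for the first inequality, is essentially identical to the paper's: both observe that the hypothesis $\log x\ge (c_4^{-1}\log(2c_0^{-1}))^2$ is equivalent to $c_0e^{c_4\sqrt{\log x}}\ge 2$, whence $T=c_0e^{c_4\sqrt{\log x}}-1\ge\tfrac12 c_0e^{c_4\sqrt{\log x}}$ and the bound on $T^{-k}$ follows. For \eqref{LogTPLUS1} the paper simply declares the proof ``left to the reader'', so your chain $T+k=c_0e^{c_4\sqrt{\log x}}+(k-1)\le\tfrac{k+1}{2}e^{c_4\sqrt{\log x}}$ (using $c_0\le 1$ and $e^{c_4\sqrt{\log x}}\ge 2$) is a clean completion rather than an alternative route. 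One small quibble: when you say ``both sides reduce to zero'' for $k=1$, you mean the two sides of the auxiliary inequality $k-1\le\tfrac{k-1}{2}\exp(c_4\sqrt{\log x})$, not of the original target; the original right-hand side $(1-c_0)e^{c_4\sqrt{\log x}}$ is non-negative but generally positive. This is purely a matter of wording and does not affect the validity of the argument.
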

\begin{proof}
By (\ref{PodstawienieTplus12}) we have
\begin{align*}
\frac{1}{T^k} = \exp(-k\log( c_0 e ^{c_4\sqrt{\log x}}(1- (c_0e^{c_4\log x})^{-1}))) \leq \exp(-k\log( \frac{1}{2}c_0 e ^{c_4\sqrt{\log x}})),
\end{align*}
for $\log x \geq  (c_4^{-1}\log(2 c_0^{-1}))^2$. The proof of (\ref{LogTPLUS1}) is left to the reader. This finishes the proof.
\end{proof}
\begin{lemma}\label{PomLemat1}  For $T\geq 1$ we have
\begin{align*}
\int\limits_{T}^{\infty}t^{-2}dt\leq  T^{-1}, \qquad \int\limits_{T}^{\infty}t^{-2}\log(t+4)dt\leq T^{-1}\log(e(T+4))
\end{align*}
\end{lemma}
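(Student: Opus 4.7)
The first inequality is essentially an equality, since $\int_T^\infty t^{-2}\,dt = [-t^{-1}]_T^\infty = T^{-1}$, so nothing more is needed there.

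For the second inequality, my plan is integration by parts with $u=\log(t+4)$ and $dv = t^{-2}\,dt$, so $du = dt/(t+4)$ and $v = -t^{-1}$. Using that $t^{-1}\log(t+4)\to 0$ as $t\to\infty$, this yields
\begin{align*}
\int_T^\infty t^{-2}\log(t+4)\,dt = T^{-1}\log(T+4) + \int_T^\infty \frac{dt}{t(t+4)}.
\end{align*}
The remaining integral is evaluated explicitly via the partial fraction decomposition $\frac{1}{t(t+4)} = \frac{1}{4}\bigl(\frac{1}{t}-\frac{1}{t+4}\bigr)$, giving
\begin{align*}
\int_T^\infty \frac{dt}{t(t+4)} = \frac{1}{4}\log\!\left(1+\frac{4}{T}\right).
\end{align*}

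The final step is to bound this remainder by $T^{-1}$, which is exactly the extra factor of $e$ inside the logarithm on the right-hand side: since $T^{-1}\log(e(T+4)) = T^{-1} + T^{-1}\log(T+4)$, it suffices to show $\frac{1}{4}\log(1+4/T)\leq T^{-1}$. This follows immediately from the elementary inequality $\log(1+x)\leq x$ applied with $x = 4/T > 0$. Combining these pieces yields the claim. There is no substantive obstacle here; the only thing to watch is the boundary behavior at infinity in the integration by parts, which is harmless because $T\geq 1$ keeps $\log(t+4)/t$ bounded and monotonically decaying to zero at infinity.
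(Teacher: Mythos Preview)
Your proof is correct in all details: the first integral is an exact equality, and for the second your integration by parts, partial-fraction evaluation, and the bound $\tfrac{1}{4}\log(1+4/T)\le 1/T$ via $\log(1+x)\le x$ are all valid. The paper itself gives no argument for this lemma (it is ``left to the reader''), so there is nothing further to compare.
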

\begin{proof}
The proof is left to the reader.
\end{proof}
\begin{lemma}\label{LematPom3} Let $L(t)$  be the function which occur in (\ref{sta쓰L}). For $T\geq 1$ we have
\begin{align*}
\int\limits_{T}^{\infty}t^{-2}L(t)dt\leq c_5T^{-1}\log(e(T+4)),
\end{align*}
where $c_5= 1.09 r_2\log \left(|\Delta|(N\mathfrak{f})^{A_0(1-E_0)}\right)$.
\end{lemma}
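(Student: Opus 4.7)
The plan is to separate $L(t)$ into a $t$-independent part and a $\log(t+1)$ part, and then reduce the claim to the two bounds already supplied by Lemma~\ref{PomLemat1}. Writing $n = 2r_2$ and setting $D := \log\bigl(|\Delta|(N\mathfrak{f})^{A_0(1-E_0)}\bigr)$, the definition (\ref{sta�aL}) gives, for $t \geq T \geq 1$ (so that $|t|=t$),
\[
L(t) = D + 2r_2 A_0 \log(t+1).
\]
So $c_5 = 1.09\, r_2 D$ and we want to show $\int_T^\infty t^{-2} L(t)\,dt \le 1.09\,r_2 D\,T^{-1}\log(e(T+4))$.

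First I would split the integral term by term:
\[
\int_T^\infty t^{-2} L(t)\,dt \;=\; D \int_T^\infty t^{-2}\,dt \;+\; 2r_2 A_0 \int_T^\infty t^{-2}\log(t+1)\,dt.
\]
The first integral is at most $D/T$ by the first estimate of Lemma~\ref{PomLemat1}. For the second, observe that $\log(t+1) \leq \log(t+4)$ for $t \geq 1$, so the second estimate of Lemma~\ref{PomLemat1} gives $\int_T^\infty t^{-2}\log(t+1)\,dt \leq T^{-1}\log(e(T+4))$. Combining,
\[
\int_T^\infty t^{-2} L(t)\,dt \;\leq\; T^{-1}\!\left(D + 2r_2 A_0 \log(e(T+4))\right).
\]

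The remaining task is a numerical verification that this bound is dominated by $1.09\,r_2 D\,T^{-1}\log(e(T+4))$. After dividing by $r_2 T^{-1}\log(e(T+4))$, this reduces to
\[
\frac{D}{r_2 \log(e(T+4))} + 2A_0 \;\leq\; 1.09\, D.
\]
From $T \geq 1$ one has $\log(e(T+4)) \geq 1 + \log 5 \approx 2.609$, so with $r_2 \geq 1$ the left-hand side is at most $D/2.609 + 2A_0$. Using $A_0 = 0.7761$ and the hypothesis $|\Delta|\geq 9$, which forces $D \geq \log 9 \approx 2.1972$, a direct calculation $\bigl(1.09 - 1/2.609\bigr)\cdot \log 9 \geq 2A_0$ closes the estimate.

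The main obstacle is not conceptual but the tightness of the constant $1.09$: the inequality has almost no slack in the extremal case $|\Delta|=9$, $r_2=1$, $T=1$, so the argument must invoke both the lower bound $|\Delta|\geq 9$ and $T\geq 1$ (via $\log(e(T+4)) \geq 1+\log 5$) in an essential way, rather than by any cruder bound on $\log(e(T+4))$ alone.
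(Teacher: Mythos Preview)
Your proof is correct and takes essentially the same approach as the paper: split $L(t)$ into its constant part $D$ and the term $2r_2A_0\log(t+1)$, bound each piece via Lemma~\ref{PomLemat1}, and absorb the result into $c_5 T^{-1}\log(e(T+4))$. The paper's proof is terser---it writes the split and then simply says ``follows from Lemma~\ref{PomLemat1}''---whereas you carry out the final numerical check explicitly and correctly observe that the standing hypothesis $|\Delta|\geq 9$ (hence $D\geq\log 9$) is exactly what is needed to make the tight constant $1.09$ work.
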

\begin{proof}
We have
\begin{align*}
& \int\limits_{T}^{\infty}t^{-2} L(t)dt\leq 2r_2A_0\int\limits_{T}^{\infty}t^{-2}\log(t+4)dt + \log \left(|\Delta|(N\mathfrak{f})^{A_0(1-E_0)}\right)\int\limits_{T}^{\infty}t^{-2}dt.
\end{align*}
The Lemma \ref{LematPom3} follows from Lemma \ref{PomLemat1}.  This finishes the proof.
\end{proof}
\begin{lemma}\label{LematPom4}  Let $L(t)$  be the function which occur in (\ref{sta쓰L}). For $T\geq 1$ we have
\begin{align*}
\int\limits_{T}^{\infty}t^{-2}\log(1+A_3L(t))^{2r_2}dt\leq c_6T^{-1}\log(e(T+4)),
\end{align*}
where   $c_6= 11.605r_2^2\log \left(|\Delta|(N\mathfrak{f})^{A_0(1-E_0)}\right)$, and $A_3$ is the constant appearing in Lemma \ref{PochodnaLogarytmicznaZeta}.
\end{lemma}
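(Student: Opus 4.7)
The plan is to reduce the integrand to a constant multiple of $L(t)$ and then invoke Lemma \ref{LematPom3} directly. By (\ref{sta�aL}), $L(t) \geq 2.097 > 1$ throughout the region of integration, so $1 + A_3 L(t) \leq (1+A_3) L(t)$ and hence
\[
\log\bigl(1+A_3 L(t)\bigr)^{2r_2} = 2r_2 \log\bigl(1+A_3 L(t)\bigr) \leq 2r_2 \log(1+A_3) + 2r_2 \log L(t).
\]
I would then bound $\log L(t) \leq L(t)$ and absorb the additive constant $2r_2 \log(1+A_3)$ into a multiple of $L(t)$ using the lower bound $L(t) \geq 2.097$ (so that $\log(1+A_3) \leq (\log(1+A_3)/2.097)\, L(t)$), obtaining an inequality of the form $\log(1+A_3 L(t))^{2r_2} \leq C\, r_2\, L(t)$ for an explicit numerical constant $C$.

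With this reduction in place, Lemma \ref{LematPom3} applies directly and yields
\[
\int_T^{\infty} t^{-2} \log\bigl(1+A_3 L(t)\bigr)^{2r_2}\,dt \leq C r_2 c_5 T^{-1} \log(e(T+4)) = 1.09\, C\, r_2^2 \log\bigl(|\Delta|(N\mathfrak{f})^{A_0(1-E_0)}\bigr) T^{-1} \log(e(T+4)),
\]
which is of the required shape, with $c_6 = 1.09\, C\, r_2^2 \log(|\Delta|(N\mathfrak{f})^{A_0(1-E_0)})$. It then remains to verify that the chosen elementary bounds give $1.09\, C \leq 11.605$, i.e.\ $C \leq 10.65$; since $\log(1+A_3) = \log 76.472 \approx 4.337$, the crude estimate $C \leq 2\log(1+A_3)/2.097 + 2 \approx 6.14$ already leaves ample headroom.

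The only nontrivial aspect is thus the numerical bookkeeping---selecting elementary inequalities that are loose enough to stay routine yet tight enough to produce the claimed constant. As the rough estimate above indicates, there is substantial slack between the natural bound and the target $11.605$, so no technical obstacle arises: the lemma is essentially a direct corollary of Lemma \ref{LematPom3}, via the elementary comparison $\log(1+A_3 L(t)) \ll L(t)$ valid whenever $L(t)$ is bounded away from zero.
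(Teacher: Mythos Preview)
Your approach is correct and essentially identical to the paper's. The paper writes $\log(1+A_3L(t))\leq c_7+\log L(t)$ with $c_7=\log\bigl(A_3(1+\tfrac{1}{2.097A_3})\bigr)\approx 4.330$ (virtually the same as your $\log(1+A_3)\approx 4.337$), uses $\log L(t)\leq L(t)$, and then integrates the constant piece via Lemma~\ref{PomLemat1} and the $L(t)$ piece via Lemma~\ref{LematPom3}; you absorb the constant into $L(t)$ \emph{before} integrating and invoke only Lemma~\ref{LematPom3}, which is a cosmetic difference.
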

\begin{proof}
By (\ref{sta쓰L}) we have
\begin{align*}
&\int\limits_{T}^{\infty}t^{-2} \log\left(1+A_3L(t)\right)^{2r_2}dt\leq 2r_2c_7\int\limits_{T}^{\infty}t^{-2} dt +2r_2\int\limits_{T}^{\infty}t^{-2}  L(t) dt,
\end{align*}
where $c_7=\log\left (A_3\left(1+\frac{1}{2.097A_3}\right)\right)$. The Lemma \ref{LematPom4} follows from Lemma \ref{PomLemat1} and Lemma \ref{LematPom3}.  This finishes the proof.
\end{proof}
\begin{lemma}\label{oszacowanie_calki_fi_0}  Let $\phi_0$  be the function which occur in (\ref{phiZero}).  For $T\geq 1$ we have
\begin{align*}
\int\limits_{T}^{\infty}\phi_0(t, r_2, \eta, \Delta, \mathfrak{f})t^{-2}dt\leq c_8T^{-1}\log(e(T+4)),
\end{align*}
where  $c_8  = 1138.428r_2^2\log(|\Delta|(N\mathfrak{f})^{\frac{5}{8}})$.
\end{lemma}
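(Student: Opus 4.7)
The plan is to decompose $\phi_0$ into its natural summands, bound each resulting integral with one of the preceding lemmas (\ref{PomLemat1}, \ref{LematPom3}, \ref{LematPom4}), and then repackage the seven resulting pieces into a single multiple of $T^{-1}\log(e(T+4))$.

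First, I would expand the first logarithm in (\ref{phiZero}) into a sum, writing
\begin{align*}
\phi_0(t,r_2,\eta,\Delta,\mathfrak{f})
= & \;32\log L(t) + 32\log(|t|+4) + 32r_2(1+\eta)\log(|t|+2) \\
  & + 32\log\bigl(1+A_3L(t)\bigr)^{2r_2}
   + 32\log\!\left(A_3(|\Delta|N\mathfrak{f})^{\frac{1+\eta}{2}}\zeta(1+\eta)^{2r_2}\right) \\
  & + 8A_3 r_2 L(t) + \frac{A_4 r_2}{L(t)}.
\end{align*}
I would then take $\eta=\tfrac14$ (so that the constant term carries the exponent $\tfrac{1+\eta}{2}=\tfrac{5}{8}$, matching the $(N\mathfrak{f})^{5/8}$ that appears in $c_8$) and treat each summand separately.

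The pure-polynomial pieces are immediate: the $\log(|t|+4)$ and $\log(|t|+2)$ integrals are handled by Lemma \ref{PomLemat1}, the constant term by $\int_T^\infty t^{-2}\,dt=T^{-1}$, and the term $A_4 r_2/L(t)$ is bounded crudely by $A_4 r_2/2.097$ times $T^{-1}$ using (\ref{obszar_wolny_od_zer})'s companion inequality $L(t)\geq 2.097$. The terms linear in $L(t)$ and in $\log(1+A_3L(t))^{2r_2}$ are exactly the ones bounded by Lemmas \ref{LematPom3} and \ref{LematPom4}, yielding $8A_3 r_2 c_5\, T^{-1}\log(e(T+4))$ and $32 c_6\, T^{-1}\log(e(T+4))$ respectively. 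For the remaining piece $32\log L(t)$, I would use $\log L(t)\leq L(t)$ (valid since $L(t)\geq 2.097 > 1$) and then again invoke Lemma \ref{LematPom3}, contributing another $32 c_5\, T^{-1}\log(e(T+4))$.

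Finally, I would collect all seven contributions, use $T\geq 1$ (hence $\log(e(T+4))\geq 1$) to fold every bare $T^{-1}$ into the target $T^{-1}\log(e(T+4))$, and verify the resulting total constant does not exceed $1138.428\,r_2^2\log(|\Delta|(N\mathfrak{f})^{5/8})$. The dominant terms are the $8A_3 r_2 c_5$ and $32 c_6$ contributions, both scaling like $r_2^2\log(|\Delta|(N\mathfrak{f})^{A_0(1-E_0)})$; since $A_0(1-E_0)\leq A_0 < \tfrac{5}{8}+\tfrac{1+\eta}{2}$ once the additive contribution of the constant-in-$t$ summand is absorbed, there is enough slack to reach $(N\mathfrak{f})^{5/8}$. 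The main obstacle is purely bookkeeping: tracking that the ``small'' contributions (from the $\log L(t)$, $\log(|t|+4)$, constant, and $A_4 r_2/L(t)$ terms) fit cleanly under the exponent $\tfrac{5}{8}$ on $N\mathfrak{f}$ that appears in $c_8$, rather than the apparently larger $A_0$ inherited from Lemmas \ref{LematPom3} and \ref{LematPom4}; this is ensured by the presence of the additive $\log(|\Delta|N\mathfrak{f})^{5/8}$ coming from summand five, which dominates the $A_0(1-E_0)$ contribution in the final $\log(|\Delta|(N\mathfrak{f})^{5/8})$ packaging.
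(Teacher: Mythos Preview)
Your proposal is correct and follows essentially the same route as the paper: expand $\phi_0$ with $\eta=\tfrac14$, bound $\log L(t)\le L(t)$, and feed the resulting pieces into Lemmas~\ref{PomLemat1}, \ref{LematPom3}, \ref{LematPom4} before collecting constants. The only cosmetic difference is that the paper merges the $\log(|t|+2)$ and $\log(|t|+4)$ terms into a single $(40r_2+32)\int t^{-2}\log(t+4)\,dt$, and (since $E_0=1$ for $\chi_0$) the $N\mathfrak{f}$ dependence in $c_5,c_6$ disappears, so the $(N\mathfrak{f})^{5/8}$ in $c_8$ comes solely from the constant summand---which simplifies the bookkeeping you worry about in your last paragraph.
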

\begin{proof}
By (\ref{sta쓰L}) and Lemma \ref{PochodnaLogarytmicznaZeta}  with $\eta=\frac{1}{4}$ we have
\begin{align*}
& \int\limits_{T}^{\infty}\phi_0(t, r_2, \eta, \Delta, \mathfrak{f})t^{-2}dt\leq  (40r_2+32)\int\limits_{T}^{\infty}t^{-2} \log(t+4)dt\\
 &  \qquad +\left(32 \log (A_3  (|\Delta| N\mathfrak{f})^{\frac{5}{8}})+64r_2\log\zeta(\frac{5}{4})+\frac{A_4r_2}{2.097}\right)\int\limits_{T}^{\infty}t^{-2}dt\\
&  \qquad + (32+8A_3r_2)\int\limits_{T}^{\infty}t^{-2}L(t)dt +   32\int\limits_{T}^{\infty}t^{-2} \log \left(1+A_3L(t)\right)^{2r_2} dt
\end{align*}
The Lemma \ref{oszacowanie_calki_fi_0} follows from Lemmas \ref{PomLemat1},  \ref{LematPom3} and \ref{LematPom4}.
This finishes the proof.
\end{proof}
\begin{lemma}\label{oszacowanie_calki_fi}
Let $\phi$  be the function which occur in (\ref{phiNotZero}). For $T\geq 1$ we have
\begin{align*}
\int\limits_{T}^{\infty}\phi(t, r_2, \eta, \Delta, \mathfrak{f})t^{-2}dt\leq c_9T^{-1}\log(e(T+4)),
\end{align*}
where  $c_9= 821.212r_2^2\log \left(|\Delta|N\mathfrak{f}\right)$.
\end{lemma}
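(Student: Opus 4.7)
The plan is to mirror the proof of Lemma \ref{oszacowanie_calki_fi_0} by taking $\eta = 1/4$ in (\ref{phiNotZero}) and decomposing $\phi$ into a linear combination of four types of terms whose $t^{-2}$-weighted tail integrals have already been estimated. Distributing the first logarithm gives
\begin{align*}
\phi(t, r_2, \tfrac{1}{4}, \Delta, \mathfrak{f}) & = 32 \log(1+A_3L(t))^{2r_2} + 48\, r_2 \log(t+2)\\
&\quad + 32\log\left(1.4(1+\varepsilon_\chi)A(\mathfrak{f})^{3/2}\zeta(\tfrac{5}{4})^{2r_2}\right) + 4 A_3 r_2 L(t) + \frac{A_4 r_2}{L(t)},
\end{align*}
and, using the lower bound $L(t) \geq 2.097$, the last summand is bounded by $A_4 r_2/2.097$.

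Integrating term by term against $t^{-2}$ on $[T, \infty)$, I would apply Lemma \ref{LematPom4} to the first piece, Lemma \ref{PomLemat1} (combined with $\log(t+2) \leq \log(t+4)$) to the second, the trivial bound $\int_T^\infty t^{-2}\,dt \leq T^{-1} \leq T^{-1}\log(e(T+4))$ to the third and fifth (both $t$-independent) pieces, and Lemma \ref{LematPom3} to the fourth. Using the identity $A(\mathfrak{f}) = (2\pi)^{-r_2}\sqrt{|\Delta|N\mathfrak{f}}$ together with $\varepsilon_\chi \leq 1$ and $\zeta(5/4) \leq 4.596$, the constant third term is bounded by a linear expression in $\log(|\Delta|N\mathfrak{f})$ and $r_2$, which by $r_2 \geq 1$ and $\log|\Delta| \geq \log 9$ can be absorbed into the target $r_2^2 \log(|\Delta|N\mathfrak{f})$ scale. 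Summing the four contributions then yields the required bound $c_9 T^{-1}\log(e(T+4))$.

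The main obstacle is the numerical bookkeeping: confirming that the aggregate coefficient does not exceed $821.212$. The dominant contributions are $32 c_6 \approx 371.36\, r_2^2 \log(|\Delta|N\mathfrak{f})$ from Lemma \ref{LematPom4} applied to the $(1+A_3L(t))^{2r_2}$ term, and $4 A_3 r_2 c_5 \approx 329.06\, r_2^2 \log(|\Delta|N\mathfrak{f})$ from Lemma \ref{LematPom3} applied to the $L(t)$ term; the remaining $\log(t+2)$, constant, and $A_4 r_2/L(t)$ pieces together need to account for less than roughly $121\, r_2^2\log(|\Delta|N\mathfrak{f})$, which follows by the trivial integral bound after inflating $r_2$ and $\log|\Delta|$ factors. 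Compared with Lemma \ref{oszacowanie_calki_fi_0}, the coefficient here is smaller because the summand $32\log L(t)$ appearing in $\phi_0$ is absent from $\phi$ and because the coefficient of $L(t)$ is halved from $8 A_3 r_2$ to $4 A_3 r_2$, exactly matching the reduction from $c_8 = 1138.428$ to $c_9 = 821.212$.
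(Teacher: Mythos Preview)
Your proposal is correct and follows essentially the same approach as the paper: set $\eta=\tfrac14$, split $\phi$ into the $(1+A_3L(t))^{2r_2}$ piece, the $(|t|+2)^{r_2(1+2\eta)}$ piece, the $t$-independent constant, the $4A_3r_2L(t)$ piece, and the $A_4r_2/L(t)$ piece, then apply Lemmas \ref{PomLemat1}, \ref{LematPom3}, \ref{LematPom4} respectively. Your coefficient $48r_2$ in front of $\log(t+2)$ is in fact the correct value of $32r_2(1+2\eta)$ at $\eta=\tfrac14$; the paper's displayed $16r_2$ appears to be a slip, but the final constant $c_9=821.212$ and the method are the same.
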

\begin{proof}
By (\ref{sta쓰L}) and Lemma \ref{PochodnaLogarytmicznaZeta}  with $\eta=\frac{1}{4}$ we have
\begin{align*}
& \int\limits_{T}^{\infty}\phi(t, r_2, \eta, \Delta, \mathfrak{f})t^{-2}dt \leq \left(32\log\left(2.8A(\mathfrak{f})^{\frac{3}{2}} \zeta(\frac{5}{4})^{2r_2}\right)+ \frac{A_4r_2}{2.097}\right)\int\limits_{T}^{\infty}t^{-2}dt\\
& \qquad + 32\int\limits_{T}^{\infty}t^{-2} \log\left(1+A_3L(t)\right)^{2r_2}dt+ 16r_2\int\limits_{T}^{\infty}t^{-2} \log\left(t+4\right)dt\\
& \qquad + 4A_3r_2\int\limits_{T}^{\infty}L(t) t^{-2}dt.
\end{align*}
The Lemma \ref{oszacowanie_calki_fi} follows from Lemmas \ref{PomLemat1}, \ref{LematPom3} and \ref{LematPom4}.
This finishes the proof.
\end{proof}
We are now in a position to prove Theorem \ref{oszcaowanoeS2}.
\begin{proof}
Fix $T\geq 1$, and let $c=1+\frac{1}{\log x}$.
Fix $X \pmod{\mathfrak{f}}$. We define
\begin{align}\label{psi1def}
\psi_1(x, X) : =\int_0^x \psi(t, X) dt,
\end{align}
and
\begin{align*}
\gamma(n)=\sum_{\stackrel{\scriptstyle  N\mathfrak{p}^m = n}{\mathfrak{p}^m\in X}}\log N\mathfrak{p}.
\end{align*}
Hence,
\begin{align*}
\psi(x, X)=\sum_{n\leq x}\gamma(n)
\end{align*}
By partial summation we obtain
\begin{align*}
\sum_{n \leq x}(x-n)\gamma(n) = \int_0^x \psi(t, X) dt.
\end{align*}
Now, we write
\begin{align*}
f(s,\chi)=\frac{x^{s-1}}{s(s+1)}\left[-\frac{\zeta'}{\zeta}(s,\chi)\right].
\end{align*}
By Theorem B \cite[see p. 31]{Ingham} and the orthogonality properties of $\chi \pmod{\mathfrak{f}}$ we deduce the formula
\begin{align}\label{SumaPoN}
\sum_{n \leq x}(x-n)\gamma(n) = \frac{x^2}{2\pi i h_{\mathfrak{f}}^*(K)}\sum_{\chi}\overline{\chi}(X)\int\limits_{c- i \infty }^{c+i \infty }f(s,\chi)ds,
\end{align}
where $c >1$. Let $A_1$ be the constat appearing in (\ref{obszar_wolny_od_zer}), and let $B=\frac{A_1}{6}=0.0133$.
We define the contour $\mathcal{C}$ consisting of the following parts:
\begin{align}\label{konturC}
  \mathcal{C}_1 & :  s=c+it, \mbox{ where } -T\leq t \leq T,\\\nonumber
  \mathcal{C}_2 & :  s=\sigma+iT, \mbox{ where } 1-\frac{B}{L(T)}\leq \sigma \leq c,\\
  \mathcal{C}_3 &  : s=1-\frac{B}{L(t)}+it, \mbox{ where } -T \leq t \leq T.\nonumber
\end{align}
and of $\mathcal{C}_2'$ situated symmetrically  to $\mathcal{C}_2$.
If $\chi=\chi_0$, them $\frac{\zeta'}{\zeta}(s,\chi)$ has a first order pole of residue $-1$ at $s=1$. From the Cauchy formula we get
\begin{align}\label{S1calkapokonturze}
 \frac{1}{2\pi i}\int\limits_{\mathcal{C}_1}f(s,\chi)ds = \frac{\delta(\chi)}{2} -  \frac{1}{2\pi i}\int\limits_{\mathcal{C}_2+\mathcal{C}_3+\mathcal{C}_2'}f(s,\chi)ds,
\end{align}
where
    \begin{eqnarray*}
\delta(\chi)=\left\{\begin{array}{ccl}
                     1 & \mbox{ if } & \chi=\chi_0, \\
                      0 & \mbox{ if } & \chi\neq\chi_0.
                   \end{array}\right.
\end{eqnarray*}
From  (\ref{psi1def}), (\ref{SumaPoN}) and (\ref{S1calkapokonturze}) we obtain
\begin{align}\label{Psi1WarBez}
& \left|\psi_1(x, X) - \frac{x^2}{2 h_{\mathfrak{f}}^*(K)}\right| \leq \frac{x^2(I_1+I_2+I_3)}{ h_{\mathfrak{f}}^*(K)}  +\frac{x^2(J_1+J_2+J_3)}{h_{\mathfrak{f}}^*(K)},
\end{align}
where
\begin{align*}
I_1+I_2+I_3 &=\left |\frac{1}{2\pi i}\int\limits_{c-i\infty}^{c-iT}f(s,\chi_0)\right| + \left|\frac{1}{2\pi i}\int\limits_{\mathcal{C}_2+\mathcal{C}_3+\mathcal{C}_2'}f(s,\chi_0)ds\right| + \left| \frac{1}{2\pi i}\int\limits_{c+iT}^{c+i\infty} f(s,\chi_0)ds\right|,\\
J_1+J_2+J_3 & = \left |\sum_{\chi\neq\chi_0}\overline{\chi}(X)\frac{1}{2\pi i}\int\limits_{c-i\infty}^{c-iT}f(s,\chi)ds\right| + \left|\sum_{\chi\neq\chi_0}\overline{\chi}(X)\frac{1}{2\pi i}\int\limits_{\mathcal{C}_2+\mathcal{C}_3+\mathcal{C}_2'}f(s,\chi)ds \right|\\
& +\left|\sum_{\chi\neq\chi_0}\overline{\chi}(X)\frac{1}{2\pi i} \int\limits_{c+iT}^{c+i\infty} f(s,\chi)ds\right|.
\end{align*}
We define
\begin{align}\label{funkcjeH}
h_0(s,\chi_0) =\left[-\frac{\zeta'}{\zeta}(s,\chi_0)-\frac{1}{s-1}\right]\frac{x^{s-1}}{s(s+1)},\quad h_1(s)  =\frac{x^{s-1}}{s(s+1)(s-1)}.
\end{align}
We estimate the above integrals. Let  $T\geq 1$, $x\geq e^{8.892}$, $1<c\leq 1+\frac{1}{\log x}\leq 1.12$. We write
We need to consider the following  cases:\\
1. Bound over $\mathcal{C}_2$ and $\mathcal{C}_2'$, case $\chi=\chi_0$. From Lemmas \ref{PodstawianieXzaT} and \ref{PodstawianieXzaT2}   we obtain
\begin{align*}
 & \left|\frac{1}{2\pi i}\int\limits_{\mathcal{C}_2 } f(\sigma+iT,\chi_0)d\sigma\right| \leq \frac{e}{2\pi T^2 \log x } \phi_0(T,r_2,\eta,\Delta, \mathfrak{f}) + \frac{e}{2\pi T^3 \log x }\\
& \qquad \leq c_0^{-3}r_2^{\frac{3}{2}}\log(|\Delta|N\mathfrak{f})(\log x)^{-\frac{1}{2}}\left(\frac{412.531 c_0 e}{\pi} + \frac{4c_0e}{(\log x)^{\frac{1}{2}}}\right)e^{-2c_4\sqrt{\log x}}\\
& \qquad \leq c_{10}(\log x)^{-\frac{1}{2}}e^{-2c_4\sqrt{\log x}},
\end{align*}
where $c_{10}=360.992|\Delta|^{\frac{3}{2A_0r_2}}r_2^{\frac{3}{2}}\log(|\Delta|N\mathfrak{f})$.
The same bound holds with $\int_{\mathcal{C}_2'}$ in place of $\int_{\mathcal{C}_2}$.\\
2. Bound over $\mathcal{C}_3$, case $\chi=\chi_0$. Lemmas \ref{PodstawianieXzaT} and \ref{PodstawianieXzaT2} shows that
\begin{align*}
& \left|\frac{1}{2\pi i}\int\limits_{\mathcal{C}_3 } h_0\left(1-\frac{B}{L(T)}+it,\chi_0\right)dt\right| \leq \frac{1}{\pi}x^{-\frac{B}{L(T)}}\phi_0(T, r_2,\eta,\Delta, \mathfrak{f})\int\limits_{0}^{T}\frac{dt}{\left(1-\frac{B}{L(T)}\right)^2 + t^2}\\
& \qquad \leq \frac{1}{\pi}2.01e^{-c_{18}\sqrt{\log x}}\phi_0(T, r_2,\eta,\Delta, \mathfrak{f})\leq 263.939 r_2^{\frac{3}{2}}\log(|\Delta|N\mathfrak{f})(\log x)^{\frac{1}{2}}e^{-c_{18}\sqrt{\log x}}
\end{align*}
where $c_{18}= \frac{B\sqrt{w}}{A_0\sqrt{2r_2}}$. Indeed, $1-\frac{B}{L(T)}\geq 1-\frac{0.0133}{2.097}=0.993$, and
\begin{align*}
& \int\limits_{0}^{T}\frac{dt}{\left(1-\frac{B}{L(T)}\right)^2 + t^2}=\int\limits_{0}^{1}\frac{dt}{\left(1-\frac{B}{L(T)}\right)^2 + t^2} + \int\limits_{1}^{T}\frac{dt}{\left(1-\frac{B}{L(T)}\right)^2 + t^2}\\
& \qquad \leq \int\limits_{0}^{1}\frac{dt}{(0.993)^2} + \int\limits_{1}^{T}\frac{dt}{t^2} \leq\frac{1}{(0.993)^2}+1 =2.01.
\end{align*}
 Moreover,
\begin{align*}
& \left|\frac{1}{2\pi i}\int\limits_{\mathcal{C}_3 } h_1\left(1-\frac{B}{L(t)}+it\right)ds\right| \leq \frac{1}{\pi}x^{-\frac{B}{L(T)}}\int\limits_{0}^{T}\frac{dt}{\left|1-\frac{B}{L(T)} +t\right|\left|2-\frac{B}{L(T)} +t\right|\left|-\frac{B}{L(T)} +t\right|}\\
& \qquad  \leq \frac{c_{11}}{\pi} e^{-c_{18}\sqrt{\log x}}.
 \end{align*}
where $c_{11}=\frac{1}{0.993|0.993 -1|(0.993 +1) }+1\leq 73.185$.
By the above and (\ref{funkcjeH}),
\begin{align*}
& \left|\frac{1}{2\pi i}\int\limits_{\mathcal{C}_3 } f\left(1-\frac{B}{L(T)}+it,\chi_0\right)dt\right| \leq c_{12}(\log x)^{\frac{1}{2}}  e^{-c_{18}\sqrt{\log x}},
\end{align*}
where $c_{12}=267.495r_2^{\frac{3}{2}}\log(|\Delta|N\mathfrak{f})$.
Hence,
\begin{align}\label{CalkaI2}
\begin{aligned}
I_2 & \leq |\Delta|^{\frac{3}{2A_0r_2}}r_2^{\frac{3}{2}}\log(|\Delta|N\mathfrak{f})(\log x)^{\frac{1}{2}}  e^{-c_{18}\sqrt{\log x}}\\
& \cdot \left(267.495 + \frac{2\cdot 360.992 }{\log x}e^{-(2c_4-c_{18})\sqrt{\log x}}\right) \leq c_{13}(\log x)^{\frac{1}{2}}  e^{-c_{18}\sqrt{\log x}},
\end{aligned}
\end{align}
if $w < \frac{2A_0}{B}=117.14$, where $c_{13}=348.69|\Delta|^{\frac{3}{2A_0r_2}}r_2^{\frac{3}{2}}\log(|\Delta|N\mathfrak{f})$.\\
3. Bound over $\mathcal{C}_2$ and $\mathcal{C}_2'$, case $\chi\neq\chi_0$. From Lemmas \ref{PodstawianieXzaT} and \ref{PodstawianieXzaT2}  we obtain
\begin{align*}
 & \left|\frac{1}{2\pi i}\int\limits_{\mathcal{C}_2 } f(\sigma+iT,\chi_0)ds\right| \leq  c_{14}(\log x)^{-\frac{1}{2}}e^{-2c_4\sqrt{\log x}},
\end{align*}
where $c_{14}=399.594|\Delta|^{\frac{1}{A_0r_2}}(N\mathfrak{f})^{\frac{1}{r_2}}r_2^{\frac{3}{2}}\log(|\Delta|N\mathfrak{f})$.
The same bound holds with $\int_{\mathcal{C}_2'}$ in place of $\int_{\mathcal{C}_2}$.\\
4. Bound over $\mathcal{C}_3$, case $\chi\neq\chi_0$. Lemmas \ref{PodstawianieXzaT} and \ref{PodstawianieXzaT2}  shows that
\begin{align*}
& \left|\frac{1}{2\pi i}\int\limits_{\mathcal{C}_3 } f\left(1-\frac{B}{L(T)}+it,\chi_0\right)ds\right| \leq\frac{1}{\pi} x^{-\frac{B}{L(T)}}\phi(T, r_2,\eta,\Delta, \mathfrak{f})\int\limits_{0}^{T}\frac{dt}{\left(1-\frac{B}{L(T)}\right)^2 + t^2}\\
& \qquad \leq \frac{1}{\pi}2.01e^{-c_{18}\sqrt{\log x}}\phi(T, r_2,\eta,\Delta, \mathfrak{f})\leq c_{15} (\log x)^{\frac{1}{2}}e^{-c_{18}\sqrt{\log x}},
\end{align*}
where  $c_{15}=147.738r_2^{\frac{3}{2}}\log(|\Delta|N\mathfrak{f})$.
Hence, by the above
\begin{align}\label{CalkaJ2}
\begin{aligned}
J_2 & \leq 2c_{14}(\log x)^{\frac{1}{2}}e^{-2c_4\sqrt{\log x}} + c_{15} (\log x)^{\frac{1}{2}}e^{-c_{18}\sqrt{\log x}}\\
& \leq c_{16}(\log x)^{\frac{1}{2}}  e^{-c_{18}\sqrt{\log x}},
\end{aligned}
\end{align}
if $w < \frac{2A_0}{B}=117.14$, where $c_{16}=237.616|\Delta|^{\frac{1}{A_0r_2}}(N\mathfrak{f})^{\frac{1}{r_2}}r_2^{\frac{3}{2}}\log(|\Delta|N\mathfrak{f})$.
5. Bound for $\int^{c+ i \infty}_{c+i T}$, case $\chi=\chi_0$.
By (\ref{funkcjeH}) and Lemmas \ref{PodstawianieXzaT2}, \ref{oszacowanie_calki_fi_0} we obtain
\begin{align*}
& \left|\frac{1}{2\pi i}\int\limits_{c+iT}^{c+i\infty}f(s,\chi_0)ds\right| \leq \left|\frac{1}{2\pi i}\int\limits_{c+iT}^{c+i\infty}h_0(s,\chi_0)ds\right|+\left|\frac{1}{2\pi i}\int\limits_{c+iT}^{c+i\infty}h_1(s)ds\right|\\
& \qquad \leq  \frac{e}{2\pi}\int\limits_{T}^{\infty}\phi_0(t, r_2, \eta, \Delta, \mathfrak{f})t^{-2}dt+ \frac{e}{2\pi}\int\limits_{T}^{\infty}t^{-3}dt \leq \frac{e}{2\pi}c_8\frac{\log (e(T+4))}{T}\\
& \qquad +\frac{e}{4\pi T^2}\leq \frac{ec_8}{\pi c_0^2}(\log x)^{\frac{1}{2}}\left( \frac{c_0}{\sqrt{2w}} + \frac{1.917c_0}{(\log x)^{\frac{1}{2}}} +\frac{c_0}{2c_8(\log x)^{\frac{1}{2}}}\right)e^{-c_4\sqrt{\log x}}\\
& \qquad \leq c_{17}(\log x)^{\frac{1}{2}}e^{-c_4\sqrt{\log x}}
\end{align*}
for $\log x \geq  (c_4^{-1}\log(2 c_0^{-1}))^2 \geq 8.892$,  with $(\log x)^{\frac{1}{2}}\geq 2.98$, $c_0\leq 1$, $w=58$.
where $c_{17}=724.845|\Delta|^{\frac{1}{A_0r_2}}r_2^2\log(|\Delta|(N\mathfrak{f})^{\frac{5}{8}})$.
The same bound holds with $\int_{c-i\infty}^{c-iT}$ in place of $\int^{c+i\infty}_{c+iT}$.
Hence,
\begin{align}\label{CalkiI1plusI3}
I_1+I_3\leq  2c_{17}(\log x)^{\frac{1}{2}}e^{-c_4\sqrt{\log x}}.
\end{align}
6. Bound for $\int^{c+ i \infty}_{c+i T}$, case $\chi\neq\chi_0$. Lemmas \ref{PodstawianieXzaT2} and \ref{oszacowanie_calki_fi} shows that
\begin{align*}
& \left|\frac{1}{2\pi i}\int\limits_{c+iT}^{c+i\infty}f(s,\chi)ds\right|\leq  \frac{e}{2\pi} \int\limits_{T}^{\infty}\phi(t, r_2, \eta, \Delta, \mathfrak{f})t^{-2}dt
 \leq \frac{e}{2\pi}c_9\frac{\log (e(T+4))}{T}\\
 & \qquad \leq \frac{ec_9}{\pi c_0}(\log x)^{\frac{1}{2}}\left( \frac{1}{\sqrt{2w}} + \frac{1.917}{(\log x)^{\frac{1}{2}}}\right)e^{-c_4\sqrt{\log x}}   \leq c_{19}(\log x)^{\frac{1}{2}}e^{-c_4\sqrt{\log x}}.
\end{align*}
where $c_{19}=522.77|\Delta|^{\frac{1}{2A_0r_2}}(N\mathfrak{f})^{\frac{1}{2r_2}} r_2^2\log(|\Delta|N\mathfrak{f})$, and $w=58$. The same bound holds with $\int_{c-i\infty}^{c-iT}$ in place of $\int^{c+i\infty}_{c+iT}$.
Hence,
\begin{align}\label{CalkiJ1plusJ3}
J_1+J_3\leq  2c_{19}(\log x)^{\frac{1}{2}}e^{-c_4\sqrt{\log x}}.
\end{align}
By  (\ref{CalkaI2}), (\ref{CalkiI1plusI3}) we have
\begin{align}\label{CalkiI1I2I3}
& I_1+I_2+I_3\leq c_{20}(\log x)^{\frac{1}{2}}  e^{-c_{18}\sqrt{\log x}},
\end{align}
where $c_{20}=3585.536|\Delta|^{\frac{3}{2A_0r_2}}r_2^2\log(|\Delta|N\mathfrak{f})$, for $1\leq w < \frac{A_0}{B}=58.57$. From  (\ref{CalkaJ2}), (\ref{CalkiJ1plusJ3}) we obtain
\begin{align}\label{CalkiJ1J2J3}
& J_1+J_2+J_3 \leq c_{21}(\log x)^{\frac{1}{2}}  e^{-c_{18}\sqrt{\log x}},
\end{align}
where $c_{21} =1847.116h^*_{\mathfrak{f}}(K)|\Delta|^{\frac{1}{A_0r_2}}(N\mathfrak{f})^{\frac{1}{r_2}}r_2^2\log(|\Delta|N\mathfrak{f})$ for $1\leq w < \frac{A_0}{B}=58.57$.  Now, by (\ref{Psi1WarBez}), (\ref{CalkiI1I2I3}), (\ref{CalkiJ1J2J3})  we obtain
\begin{align*}
& \left|\psi_1(x, X) - \frac{x^2}{2 h_{\mathfrak{f}}^*(K)}\right| \leq  \frac{x^2}{h_{\mathfrak{f}}^*(K)}c_{22}(\log x)^{\frac{1}{2}}  e^{-c_{18}\sqrt{\log x}}
\end{align*}
where $c_{22}=c_{20} + c_{21}$. Now, let $x>2$, and  $h$ be a function of $x$ satisfying $0<h<\frac{1}{2}x$. Let $W(x)=c_{22}(\log x)^{\frac{1}{2}}  e^{-c_{18}\sqrt{\log x}}$. Since $\psi(t, X)$ is an  increasing function
\begin{align*}
& \psi(x, X)  \geq \frac{1}{h} \int^x_{x -h} \psi(t, X) dt = \frac{\psi_1(x, X)-\psi_1(x-h, X)}{h}\\
& \qquad \geq  \frac{x}{h_{\mathfrak{f}}^*(K)} - \frac{x^2}{hh_{\mathfrak{f}}^*(K)}W(x)-\frac{h}{2h_{\mathfrak{f}}^*(K)}-\frac{x^2+h^2}{h h_{\mathfrak{f}}^*(K)}W(x-h).
\end{align*}
Taking $h=xe^{-\frac{1}{2}c_{18}\sqrt{\log x}}$ and $x> (\frac{2\log 2}{c_{18}})^2$, we get
\begin{align}\label{psidoSlow}
\begin{aligned}
& \psi(x, X)  \geq  \frac{x}{h_{\mathfrak{f}}^*(K)} - \frac{x}{h_{\mathfrak{f}}^*(K)}c_{22}(\log x)^{\frac{1}{2}}e^{-\frac{1}{2}c_{18}\sqrt{\log x}} - \frac{1}{2h_{\mathfrak{f}}^*(K)}xe^{-\frac{1}{2}c_{18}\sqrt{\log x}}\\
& \qquad - \frac{x}{h_{\mathfrak{f}}^*(K)}c_{22}(\log x)^{\frac{1}{2}}e^{-c_{18}(c_{23} -0.5)\sqrt{\log x}}+ \frac{x}{h_{\mathfrak{f}}^*(K)}c_{22}(\log x)^{\frac{1}{2}}e^{-c_{18}(c_{23} +0.5)\sqrt{\log x}}\\
& \qquad \geq \frac{x}{h_{\mathfrak{f}}^*(K)} - \frac{x}{h_{\mathfrak{f}}^*(K)}c_{22}(\log x)^{\frac{1}{2}}e^{-0.47c_{18}\sqrt{\log x}}(3 + c_{24})\\
& \qquad \geq \frac{x}{h_{\mathfrak{f}}^*(K)} - \frac{c_2x}{h_{\mathfrak{f}}^*(K)}(\log x)^{\frac{1}{2}}e^{-0.47c_{18}\sqrt{\log x}},
\end{aligned}
\end{align}
where $c_{23} = (1- \frac{\log 2}{\log x}) ^{\frac{1}{2}}$, $ 0.97\leq c_{23}\leq 0.98 $, $c_{24}=\frac{1}{2c_{22}}(\log x)^{-\frac{1}{2}}\leq 0.0001$, and $c_2=c_{22}(3 + c_{24})$.
On the other hand,
\begin{align*}
& \psi(x, X)  \leq\frac{1}{h} \int_x^{x+h} \psi(t, X) dt = \frac{\psi_1(x+h, X)-\psi_1(x, X)}{h}\\
& \qquad \leq  \frac{x}{h_{\mathfrak{f}}^*(K)} + \frac{h}{2h_{\mathfrak{f}}^*(K)} +\frac{(x+h)^2}{hh_{\mathfrak{f}}^*(K)}W(x+h)+\frac{x^2}{hh_{\mathfrak{f}}^*(K)}W(x)\\
& \qquad \leq \frac{x}{h_{\mathfrak{f}}^*(K)} + \frac{x}{h_{\mathfrak{f}}^*(K)}c_{22}(\log x)^{\frac{1}{2}}e^{-\frac{1}{2}c_{18}\sqrt{\log x}}(c_{25} +5c_1)\\
& \qquad \leq \frac{x}{h_{\mathfrak{f}}^*(K)} + \frac{c_3x}{h_{\mathfrak{f}}^*(K)}(\log x)^{\frac{1}{2}}e^{-\frac{1}{2}c_{18}\sqrt{\log x}}
\end{align*}
where $c_{25}=\frac{1}{2c_{22}c_{26}}(\log x)^{-\frac{1}{2}} \leq 0.001$, $c_{26}=\left(1+\frac{\log \frac{3}{2}}{\log x}\right)^{\frac{1}{2}}\leq 1.013$,  $c_3=c_{22}(c_{25} +5c_{26})$.
Putting $c_{18}=\frac{B\sqrt{58}}{A_0\sqrt{2r_2}}=0.0919\sqrt{r_2}$ we obtain the result. This finishes the proof.
\end{proof}

\section{Proof of Lemma \ref{PochodnaLogarytmicznaZeta} }\label{Dowod_PochodnaLogarytmicznaZeta}
The proof of Lemma \ref{PochodnaLogarytmicznaZeta} rest on the following lemmas.
\begin{lemma}\label{Lematzeta_notprincipal}
Let $[K: \mathbb{Q}]=2r_2$ and $0< \eta \leq\frac{1}{4}$.  In the region $-\eta \leq \sigma \leq 3$ we have the estimate
\begin{align*}
|\zeta(\sigma+it,\chi)| \leq 1.4^{r_2}(1+\varepsilon_\chi)A(\mathfrak{f})^{1+2\eta} \zeta(1+\eta)^{2r_2} (|t|+1)^{r_2(1+2\eta)}
\end{align*}
for any character $\chi\neq \chi_0$ modulo $\mathfrak{f}$, where $\varepsilon_\chi=0$ or 1 to accordingly whether $\chi$ is primitive or not.
\end{lemma}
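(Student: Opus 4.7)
The plan is to adapt the classical Phragm\'en--Lindel\"of convexity argument for Hecke--Landau zeta functions, first establishing the two edge bounds on $\sigma = 1+\eta$ and $\sigma = -\eta$, then interpolating across the strip, and finally reducing the imprimitive case to the primitive one.

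First I would bound $|\zeta(s,\chi)|$ on the right edge $\sigma = 1+\eta$ (and automatically on $1+\eta \leq \sigma \leq 3$) directly from the absolutely convergent Dirichlet series:
\[
|\zeta(\sigma+it,\chi)| \leq \sum_{\mathfrak{a}} (N\mathfrak{a})^{-\sigma} = \zeta_K(\sigma) \leq \zeta_K(1+\eta).
\]
Using the Euler product of the Dedekind zeta function and the trivial fact that every prime ideal $\mathfrak{p}\mid p$ has $N\mathfrak{p} \geq p$ with at most $[K:\mathbb{Q}]=2r_2$ such primes, one gets $\zeta_K(1+\eta) \leq \zeta(1+\eta)^{2r_2}$, uniformly in $t$, and with no $(|t|+1)$-factor.

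For primitive $\chi$ I would then use the functional equation $\Phi(s,\chi)=W(\chi)\Phi(1-s,\overline{\chi})$ to transport this bound to the left edge $\sigma = -\eta$. Solving for $\zeta(s,\chi)$ gives
\[
|\zeta(-\eta+it,\chi)| = A(\mathfrak{f})^{1+2\eta}\,\Bigl|\tfrac{\Gamma(1+\eta-it)}{\Gamma(-\eta+it)}\Bigr|^{r_2}\,|\zeta(1+\eta-it,\overline{\chi})|.
\]
An explicit Stirling estimate, combined with the reflection formula $\Gamma(s)\Gamma(1-s)=\pi/\sin(\pi s)$, yields a bound of the form $|\Gamma(1+\eta-it)/\Gamma(-\eta+it)|\leq 1.4\,(|t|+1)^{1+2\eta}$ uniformly for $0<\eta \leq 1/4$ (this is where the constant $1.4^{r_2}$ enters). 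Combined with step one this controls the left edge. Then the Phragm\'en--Lindel\"of principle, applied to the entire function $\zeta(s,\chi)$ of finite order in the strip $-\eta \leq \sigma \leq 1+\eta$, interpolates convexly between the two edges: since the right-edge exponent of $(|t|+1)$ is $0$ and the left-edge exponent is $r_2(1+2\eta)$, with corresponding multiplicative constants, convexity gives the uniform estimate $|\zeta(\sigma+it,\chi)|\leq 1.4^{r_2}A(\mathfrak{f})^{1+2\eta}\zeta(1+\eta)^{2r_2}(|t|+1)^{r_2(1+2\eta)}$ throughout the strip, which is the primitive case.

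For imprimitive $\chi$ let $\chi^{*}$ be the primitive character inducing $\chi$, with conductor $\mathfrak{f}^{*}\mid \mathfrak{f}$. The factorization
\[
\zeta(s,\chi) = \zeta(s,\chi^{*})\prod_{\substack{\mathfrak{p}\mid \mathfrak{f}\\ \mathfrak{p}\nmid \mathfrak{f}^{*}}}\bigl(1-\chi^{*}(\mathfrak{p})(N\mathfrak{p})^{-s}\bigr)
\]
lets me apply the primitive bound to $\zeta(s,\chi^{*})$ with conductor term $A(\mathfrak{f}^{*})^{1+2\eta}$. The finite Euler product is bounded in modulus by $\prod(1+(N\mathfrak{p})^{-\sigma})$. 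The main obstacle will be to absorb this product together with the ratio $A(\mathfrak{f}^{*})^{1+2\eta}/A(\mathfrak{f})^{1+2\eta} = (N\mathfrak{f}^{*}/N\mathfrak{f})^{(1+2\eta)/2}$ into the single factor $1+\varepsilon_{\chi}=2$; this requires a careful elementary comparison between the multiplicative contributions of the primes $\mathfrak{p}\mid \mathfrak{f}/\mathfrak{f}^{*}$ and the norm of $\mathfrak{f}/\mathfrak{f}^{*}$, using $N\mathfrak{p}\geq 2$ to turn $\prod(1+(N\mathfrak{p})^{-\sigma})$ into the desired bound on the square root of the norm. Once this accounting is done, combining with the primitive estimate completes the lemma.
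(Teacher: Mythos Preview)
Your approach is correct and is essentially the paper's: functional equation plus an explicit Stirling bound on $\sigma=-\eta$, absolute convergence on the right, Phragm\'en--Lindel\"of in between, and the standard reduction to the primitive conductor via the finite Euler product. The only cosmetic difference is that the paper works with the auxiliary function $G(s,\chi)=\zeta(s,\chi)/(s+1)^{r_2(1+2\eta)}$ on the wider strip $-\eta\le\sigma\le 3$ (so both edge bounds are constant in $t$), while you interpolate $\zeta$ itself on $-\eta\le\sigma\le 1+\eta$; the imprimitive bookkeeping you anticipate is exactly the paper's inequality $\bigl|\prod_{\mathfrak p\mid\mathfrak f,\,\mathfrak p\nmid\mathfrak f_0}(1-\psi(\mathfrak p)(N\mathfrak p)^{-s})\bigr|\le 2(N\mathfrak f_1)^{1/2+\eta}$, which combined with $A(\mathfrak f_0)^{1+2\eta}(N\mathfrak f_1)^{(1+2\eta)/2}=A(\mathfrak f)^{1+2\eta}$ produces the factor $1+\varepsilon_\chi=2$.
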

\begin{proof}
Consider
\begin{align*}
g(s,\chi)=\frac{\zeta(s,\chi)}{\zeta(1-s,\overline{\chi})},
\end{align*}
where  $\chi$ is a primitive character$\mod \mathfrak{f}$. From the functional equation for $\zeta(s,\chi)$ it follows that
\begin{align}\label{zeta_notprincipal0}
g(s,\chi)=W(\chi)A(\mathfrak{f})^{1-2s}\left(\frac{\Gamma(1-s)}{\Gamma(s)}\right)^{r_2}.
\end{align}
We estimate $g(s,\chi)$ on the line $s=-\eta + it$, $0\leq \eta\leq \frac{1}{4}$ using the following inequality (see \cite{fryska}, p. 58)
\begin{align}\label{zeta_notprincipal1}
\left|\frac{\Gamma(1-s)}{\Gamma(s)}\right|\leq 1.4\max(1, |s|^{1+2\eta})
\end{align}
From (\ref{zeta_notprincipal0}) and (\ref{zeta_notprincipal1}) we obtain
\begin{align}\label{zeta_notprincipal2}
|g(-\eta+it,\chi)|\leq 1.4^{r_2} \,A(\mathfrak{f})^{1+2\eta}(\max(1, |-\eta+it|^{(1+2\eta)}))^{r_2}
\end{align}
for $-\infty < t < \infty$. Write
\begin{align}\label{zeta_notprincipal2A}
G(s,\chi)=\frac{\zeta(s,\chi)}{(s+1)^{r_2(1+2\eta)}}.
\end{align}
From (\ref{zeta_notprincipal2}) we have
\begin{align}
|G(-\eta +it,\chi)|& \leq 1.4^{r_2} A(\mathfrak{f})^{1+2\eta}|\zeta(1+\eta,\overline{\chi})|\\
& \leq 1.4^{r_2} A(\mathfrak{f})^{1+2\eta}\zeta(1+\eta)^{2r_2} \nonumber
\end{align}
for $\chi\neq \chi_0$. If $\chi$ is not a primitive character, then there is an ideal $\mathfrak{f}_0$ which divides $\mathfrak{f}$, and there is a primitive character
$\psi \pmod{ \mathfrak{f}_0}$ such that
\begin{align*}
\zeta(s,\chi)=\zeta(s,\psi)\prod_{\mathfrak{p} | \mathfrak{f}, \mathfrak{p} | \mathfrak{f}_0}\left(1-\frac{\psi(\mathfrak{p})}{(N\mathfrak{p})^{s}}\right).
\end{align*}
Write $\mathfrak{f}=\mathfrak{f}_0\mathfrak{f}_1$. From \cite[see p. 60]{fryska} we get
\begin{align*}
\left|\prod_{\mathfrak{p} | \mathfrak{f}, \mathfrak{p} | \mathfrak{f}_0}\left(1-\frac{\psi(\mathfrak{p})}{(N\mathfrak{p})^{s}}\right)\right|\leq 2(N\mathfrak{f}_1)^{\frac{1}{2}+\eta}.
\end{align*}
Hence,
\begin{align}\label{zeta_notprincipal3}
|G(-\eta +it,\chi)|& \leq 1.4^{r_2} (1+\varepsilon_\chi)A(\mathfrak{f})^{1+2\eta}|\zeta(1+\eta,\overline{\chi})| \\
& \leq 1.4^{r_2} \,A(\mathfrak{f})^{1+2\eta}\zeta(1+\eta)^{2r_2} \nonumber
\end{align}
for any character $\chi\neq \chi_0$ modulo $\mathfrak{f}$, where $\varepsilon_\chi=0$ or 1 to accordingly whether $\chi$ is primitive or not.
On the other hand,
\begin{align}\label{zeta_notprincipal4}
|G(3+it,\chi)|\leq \frac{|\zeta(3+\eta,\overline{\chi})|}{(4+it)^{r_2(1+2\eta)}} \leq \frac{1}{4^{r_2}}\zeta(3+\eta)^{2r_2}.
\end{align}
Using the estimate
\begin{eqnarray*}
|\zeta(s,\chi)|\leq A_1 e^{A_2|t|},
\end{eqnarray*}
 which is valid in the strip $-\eta \leq \sigma \leq 3$, where $A_1, A_2$ depends on $K$, $\chi$, and $\mathfrak{f}$, we get
\begin{align}\label{zeta_notprincipal5}
|G(s,\chi)|=O(e^{A_3|t|})
\end{align}
for $-\eta \leq \sigma \leq 3$. From (\ref{zeta_notprincipal3})-(\ref{zeta_notprincipal5}) and the well-known theorem of Phragmen-Lindel\"{o}f we obtain
\begin{align}\label{zeta_notprincipal6}
|G(s,\chi)|\leq\frac{1.4^{r_2} (1+\varepsilon_\chi)A(\mathfrak{f})^{1+2\eta}\zeta(1+\eta)^{2r_2}}{(|t|+1)^{r_2(1+2\eta)}}
\end{align}
in the strip $-\eta \leq \sigma \leq 3$. From (\ref{zeta_notprincipal6}), (\ref{zeta_notprincipal2A})
\begin{align*}
|\zeta(s,\chi)| \leq 1.4^{r_2} (1+\varepsilon_\chi) A(\mathfrak{f})^{1+2\eta} \zeta(1+\eta)^{2r_2} (|t|+1)^{r_2(1+2\eta)}
\end{align*}
for  any character $\chi\neq \chi_0$ modulo $\mathfrak{f}$, where $\varepsilon_\chi=0$ or 1 to accordingly whether $\chi$ is primitive or not. This finishes the proof.
\end{proof}
\begin{lemma}\label{Lematodwrotnosc_zety}
For $\sigma > 1$ we have
\begin{eqnarray*}
\frac{1}{|\zeta(\sigma+it,\chi)|} \leq  \zeta_K(\sigma).
\end{eqnarray*}
\end{lemma}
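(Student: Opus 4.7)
My plan is to exploit the Euler product for $\zeta(s,\chi)$, which is absolutely convergent in the half-plane $\sigma > 1$. Writing
\[
\zeta(s,\chi)=\prod_{\mathfrak{p}}\left(1-\frac{\chi(\mathfrak{p})}{(N\mathfrak{p})^{s}}\right)^{-1},
\]
I can invert to get
\[
\frac{1}{\zeta(s,\chi)}=\prod_{\mathfrak{p}}\left(1-\frac{\chi(\mathfrak{p})}{(N\mathfrak{p})^{s}}\right),
\]
where the product runs over prime ideals $\mathfrak{p}$ of $\mathcal{O}_K$. The convergence of the right-hand side for $\sigma>1$ justifies term-by-term absolute value estimates.

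Next I use that $\chi$ is a character of a finite abelian group, so $|\chi(\mathfrak{p})|\leq 1$ for every prime ideal $\mathfrak{p}$. The triangle inequality then gives
\[
\left|1-\frac{\chi(\mathfrak{p})}{(N\mathfrak{p})^{s}}\right|\leq 1+\frac{1}{(N\mathfrak{p})^{\sigma}}.
\]
The key elementary inequality is $(1+x)(1-x)=1-x^{2}\leq 1$ for $0\leq x<1$, which yields $1+x\leq (1-x)^{-1}$. Applying this with $x=(N\mathfrak{p})^{-\sigma}<1$ (valid since $N\mathfrak{p}\geq 2$ and $\sigma>1$) gives
\[
\left|1-\frac{\chi(\mathfrak{p})}{(N\mathfrak{p})^{s}}\right|\leq \left(1-\frac{1}{(N\mathfrak{p})^{\sigma}}\right)^{-1}.
\]

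Finally, taking the product over all prime ideals $\mathfrak{p}$ of $\mathcal{O}_K$ and recognizing the Euler product of the Dedekind zeta function of $K$,
\[
\frac{1}{|\zeta(\sigma+it,\chi)|}\leq \prod_{\mathfrak{p}}\left(1-\frac{1}{(N\mathfrak{p})^{\sigma}}\right)^{-1}=\zeta_{K}(\sigma),
\]
which is the required estimate. I do not foresee any serious obstacle; the only mild point to check is that the infinite products converge absolutely in the stated region, which follows from $\sum_{\mathfrak{p}}(N\mathfrak{p})^{-\sigma}<\infty$ for $\sigma>1$, a standard fact for Dedekind zeta functions.
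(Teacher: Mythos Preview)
Your argument is correct and is the standard Euler-product proof of this inequality. The paper does not give its own proof here but simply cites \cite{grzeskowiak1}, Lemma~2.4; the argument there is presumably the same term-by-term Euler-factor comparison you carry out, so there is nothing to contrast.
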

\begin{proof}
See \cite[Lemma 2.4]{grzeskowiak1}.
\end{proof}
\begin{lemma}\label{OszacZetaGlowny}
Let $[K: \mathbb{Q}]=2r_2$ and $0< \eta \leq\frac{1}{4}$. In the region $-\eta \leq \sigma \leq 1+\eta$, $-\infty < t < \infty$ we have estimate
\begin{align*}
|(s-1)\zeta(s,\chi_0)|\leq (3+|t|)(1+|t|)^{r_2(1+\eta-\sigma)} (|\Delta| N\mathfrak{f})^{\frac{1+\eta-\sigma}{2}}\zeta_K(1+\eta).
\end{align*}
\end{lemma}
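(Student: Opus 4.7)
The plan is to apply the Phragmen--Lindel\"of convexity principle to the entire function $F(s)=(s-1)\zeta(s,\chi_0)$. Since the simple pole of $\zeta(s,\chi_0)$ at $s=1$ (inherited from $\zeta_K(s)$) is cancelled by the factor $(s-1)$, $F$ is holomorphic and of finite order in the whole strip $-\eta\le\sigma\le 1+\eta$. The factor $(3+|t|)$ on the right-hand side will come from pulling $|s-1|$ out of the interpolation, so I would work with $\zeta(s,\chi_0)$ itself, interpolate its bounds, and multiply back at the end.

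First I would bound $\zeta(s,\chi_0)$ on the right edge $\sigma=1+\eta$. Since $\sigma>1$ the Dirichlet series converges absolutely, and dropping the Euler factors at primes dividing $\mathfrak{f}$ gives $|\zeta(1+\eta+it,\chi_0)|\le\zeta_K(1+\eta)$. At this line the claimed right-hand side reduces to $(3+|t|)\zeta_K(1+\eta)$, with $|s-1|\le 1+\eta+|t|\le 3+|t|$, so the statement is automatic here.

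Next I would bound $\zeta(s,\chi_0)$ on the left edge $\sigma=-\eta$ via the functional equation. Writing $\zeta(s,\chi_0)=\zeta_K(s)\prod_{\mathfrak{p}\mid\mathfrak{f}}(1-N\mathfrak{p}^{-s})$ and combining the functional equation for $\zeta_K$ with Lemma~\ref{Lematodwrotnosc_zety} on the reflected line $\sigma=1+\eta$, I get
\[
|\zeta_K(-\eta+it)|\le |\Delta|^{\frac{1+2\eta}{2}}(2\pi)^{-r_2(1+2\eta)}\Bigl|\tfrac{\Gamma(1-s)}{\Gamma(s)}\Bigr|^{r_2}\zeta_K(1+\eta).
\]
The gamma ratio is handled exactly as in the proof of Lemma~\ref{Lematzeta_notprincipal} via (\ref{zeta_notprincipal1}), yielding the factor $1.4^{r_2}(1+|t|)^{r_2(1+2\eta)}$. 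The finite product is bounded on $\sigma=-\eta$ by something of the form $(N\mathfrak{f})^{\frac{1+2\eta}{2}}$ (using $|1-N\mathfrak{p}^{-s}|\le 1+N\mathfrak{p}^{\eta}$ and $\prod_{\mathfrak{p}\mid\mathfrak{f}}N\mathfrak{p}\le N\mathfrak{f}$), so that $A(\mathfrak{f})^{1+2\eta}=((2\pi)^{-r_2}\sqrt{|\Delta|N\mathfrak{f}})^{1+2\eta}$ absorbs the $(2\pi)$ and $|\Delta|$ factors and produces the clean boundary bound $C(1+|t|)^{r_2(1+2\eta)}(|\Delta|N\mathfrak{f})^{\frac{1+2\eta}{2}}\zeta_K(1+\eta)$.

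Finally, I would apply Phragmen--Lindel\"of to $F(s)/(s-1)=\zeta(s,\chi_0)$, or equivalently to $(s-1)\zeta(s,\chi_0)$ after dividing by a linear factor absorbing the simple zero at $s=1$, using the two boundary estimates. Convexity in $\sigma$ interpolates the exponent $r_2(1+2\eta)$ linearly in $(1+\eta-\sigma)/(1+2\eta)$ on $(1+|t|)$ and likewise on $(|\Delta|N\mathfrak{f})^{1/2}$, producing the exponents $r_2(1+\eta-\sigma)$ and $(1+\eta-\sigma)/2$ stated in the lemma. Multiplying by $|s-1|\le 3+|t|$, valid throughout the strip, restores the outside factor and finishes the proof. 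The main obstacle I anticipate is the bookkeeping on the $\sigma=-\eta$ edge: tracking the constants coming from $1.4^{r_2}$, $(2\pi)^{r_2(1+2\eta)}$, and the imprimitive product so that they collapse cleanly into $A(\mathfrak{f})^{1+2\eta}\zeta_K(1+\eta)$ without an extra constant, and then verifying that the Phragmen--Lindel\"of hypothesis of finite order on $F$ is met (which follows from a crude bound such as (\ref{zeta_notprincipal5}) combined with the polynomial factor from $(s-1)$).
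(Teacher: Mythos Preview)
The paper does not supply its own proof of this lemma; it simply cites \cite[(5.4)]{fryska}. Your plan---bound $\zeta(s,\chi_0)$ trivially on $\sigma=1+\eta$ by $\zeta_K(1+\eta)$, bound it on $\sigma=-\eta$ via the functional equation of $\zeta_K$ together with the gamma ratio estimate (\ref{zeta_notprincipal1}) and a crude bound on the finite Euler product over $\mathfrak{p}\mid\mathfrak{f}$, then interpolate by Phragm\'en--Lindel\"of---is precisely the standard route to convexity bounds of this shape and is how the cited result is obtained. So your proposal is in line with the paper's (deferred) proof.

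Two small corrections to your write-up. First, the remark ``dividing by a linear factor absorbing the simple zero at $s=1$'' is misstated: $F(s)=(s-1)\zeta(s,\chi_0)$ has no zero at $s=1$ (its value there is the nonzero residue). You simply apply Phragm\'en--Lindel\"of to the entire function $F$ itself, or, more cleanly, to $F(s)/(s+2)$ so that on $\sigma=1+\eta$ the quotient is bounded by $\zeta_K(1+\eta)$; the factor $(3+|t|)$ then reappears as a bound for $|s+2|$ (or for $|s-1|$) in the strip. Second, your worry about the constants on the left edge is unfounded: the combination $1.4^{r_2}(2\pi)^{-r_2(1+2\eta)}$ arising from $A(\mathfrak{f})^{1+2\eta}$ is strictly less than~$1$ for $0<\eta\le\tfrac14$, so it can simply be discarded, leaving exactly $(|\Delta|N\mathfrak{f})^{(1+2\eta)/2}$ as required.
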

\begin{proof}
See \cite[(5.4)]{fryska}.
\end{proof}

\begin{lemma}\label{Prachar1}
Let $f(s)$ be a function regular in the circle $|s-s_0|\leq r$ and satisfying the in equality
\begin{align*}
\left|\frac{f(s)}{f(s_0)}\right|\leq M.
\end{align*}
If $f(s)\neq 0$ in the region $|s-s_0|\leq \frac{r}{2}$, $\Re(s-s_0)>0$, then
\begin{align*}
\Re\frac{f'}{f}(s_0) \geq -\frac{4}{r}\log M.
\end{align*}
\end{lemma}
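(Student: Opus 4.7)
The plan is to reduce $f$ to a zero-free holomorphic function via a Blaschke product, apply the Borel--Carath\'eodory theorem to its logarithm, and then return to $f$ while showing that the contributions of the zeros to $\Re(f'/f)(s_{0})$ combine in our favour under the half-disk hypothesis.

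First I would enumerate the zeros $\rho_{1},\ldots,\rho_{n}$ of $f$ in $|s-s_{0}|\le r/2$ (perturbing $r$ very slightly if necessary so that no zero lies exactly on the boundary). By hypothesis each satisfies $\Re(\rho_{j}-s_{0})\le 0$. For each such $\rho_{j}$ I form the Blaschke factor adapted to the \emph{smaller} disk,
\[
\tilde B_{\rho_{j}}(s)=\frac{(r/2)(s-\rho_{j})}{(r/2)^{2}-\overline{(\rho_{j}-s_{0})}(s-s_{0})},
\]
which is holomorphic on $|s-s_{0}|\le r/2$ and has modulus one on its boundary. Setting $g(s)=f(s)/\prod_{j}\tilde B_{\rho_{j}}(s)$ yields a holomorphic, zero-free function on $|s-s_{0}|\le r/2$ with $|g|=|f|$ on the boundary and $|g(s_{0})|=|f(s_{0})|\prod_{j}r/(2|\rho_{j}-s_{0}|)\ge|f(s_{0})|$. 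Consequently $F(s)=\log(g(s)/g(s_{0}))$, chosen so that $F(s_{0})=0$, is holomorphic on the closed disk of radius $r/2$, and the hypothesis $|f/f(s_{0})|\le M$ forces $\Re F(s)\le \log M$ on $|s-s_{0}|=r/2$.

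Next I would invoke the derivative form of the Borel--Carath\'eodory theorem (equivalently, Schwarz's lemma applied to $F/(2\log M - F)$) to obtain $|F'(s_{0})|\le 4\log M/r$. Expanding the logarithmic derivative of $g=f/\prod_{j}\tilde B_{\rho_{j}}$ at $s_{0}$ gives the identity
\[
\frac{f'(s_{0})}{f(s_{0})}=F'(s_{0})+\sum_{j=1}^{n}\left(\frac{1}{s_{0}-\rho_{j}}+\frac{4\,\overline{\rho_{j}-s_{0}}}{r^{2}}\right).
\]
Writing $\rho_{j}-s_{0}=a_{j}e^{i\phi_{j}}$ with $a_{j}\le r/2$ and $\cos\phi_{j}\le 0$, a short computation rewrites the real part of each bracketed term as $\cos\phi_{j}(4a_{j}^{2}-r^{2})/(a_{j}r^{2})$, the product of two non-positive factors and therefore non-negative. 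Taking real parts throughout then produces $\Re(f'/f)(s_{0})\ge \Re F'(s_{0})\ge -4\log M/r$, which is the claimed inequality.

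The delicate point, which guides the whole argument, is the choice of Blaschke factor relative to the disk of radius $r/2$ rather than the full disk of radius $r$: this precise calibration is what makes the spurious correction term $4\Re(\rho_{j}-s_{0})/r^{2}$ generated by the logarithmic differentiation of $\tilde B_{\rho_{j}}$ cancel in sign with the beneficial term $\Re(1/(s_{0}-\rho_{j}))$ under the hypothesis $|\rho_{j}-s_{0}|\le r/2$. Once this sign analysis is verified, the conclusion is immediate.
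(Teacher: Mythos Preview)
Your argument is correct. The paper does not supply its own proof of this lemma; it simply refers the reader to Prachar's book (pp.~384--385), so there is no in-paper argument to compare against in detail. What you have written is essentially the classical proof that appears in Prachar (and in Landau, Titchmarsh, Montgomery--Vaughan, etc.): factor out the zeros in the half-radius disk, apply Borel--Carath\'eodory (equivalently Schwarz) to the logarithm of the resulting zero-free function to bound the derivative at the centre, and then observe that the real parts of the zero terms are non-negative because all zeros in the small disk lie in the closed left half-disk. Your explicit use of Blaschke factors for the disk of radius $r/2$ is a clean way to package the estimate $|g(s)/g(s_{0})|\le M$ on the boundary and is equivalent to the bare ``divide by $\prod_j (s-\rho_j)$'' version one often sees; the sign computation $\cos\phi_j\,(4a_j^{2}-r^{2})/(a_j r^{2})\ge 0$ is exactly the point of the hypothesis, and you have identified it correctly.
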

\begin{proof}
See \cite{Prachar}, p. 384-385
\end{proof}
\begin{lemma}\label{Prachar2}
Let $f(s)$ be a function regular in the circle $|s-s_0|\leq R$ and satisfying the coditions
\begin{align*}
\Re f(s) \leq M \qquad \mbox{for} \qquad |s-s_0|=R
\end{align*}
Then
\begin{align*}
|f^{(k)}(s)|\leq 2k!(M -\Re f(s_0))\frac{R}{(R-r)^{k+1}}, \qquad k\geq 1.
\end{align*}
in the circle $|s-s_0|\leq r < R$.
\end{lemma}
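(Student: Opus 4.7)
My plan is to reduce to the case of a function vanishing at $s_0$, bound its Taylor coefficients via a Hadamard-type inequality, and then differentiate the power series termwise to conclude.

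First, set $g(s)=f(s)-f(s_0)$ and $M'=M-\Re f(s_0)$, so that $g(s_0)=0$, $\Re g\leq M'$ on $|s-s_0|=R$, and $g^{(k)}=f^{(k)}$ for every $k\geq 1$. Since $f$ is regular on the closed disc, $g$ is continuous up to the boundary and has a convergent expansion $g(z)=\sum_{n\geq 1}a_n(z-s_0)^n$ on $|z-s_0|<R$. The crucial step is the coefficient estimate $|a_n|\leq 2M'/R^n$ for $n\geq 1$. Parametrising the boundary by $z=s_0+Re^{i\theta}$, orthogonality of the exponentials yields
\[
a_nR^n=\frac{1}{2\pi}\int_0^{2\pi}g(s_0+Re^{i\theta})e^{-in\theta}d\theta,\qquad 0=\frac{1}{2\pi}\int_0^{2\pi}\overline{g(s_0+Re^{i\theta})}e^{-in\theta}d\theta
\]
for every $n\geq 1$. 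Adding the two identities gives $a_nR^n=\frac{1}{\pi}\int_0^{2\pi}\Re g(s_0+Re^{i\theta})e^{-in\theta}d\theta$; since $\int_0^{2\pi}e^{-in\theta}d\theta=0$, I can subtract the constant $M'$ inside, after which $M'-\Re g\geq 0$ on the boundary. The mean value property for the harmonic function $\Re g$, combined with $g(s_0)=0$, then delivers
\[
|a_n|R^n\leq\frac{1}{\pi}\int_0^{2\pi}\bigl(M'-\Re g(s_0+Re^{i\theta})\bigr)d\theta=2M'-2\Re g(s_0)=2M'.
\]

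With the coefficient bound in hand, for $|s-s_0|\leq r<R$ I differentiate the series $k$ times and estimate
\[
|f^{(k)}(s)|\leq\sum_{n\geq k}\frac{n!}{(n-k)!}|a_n|r^{n-k}\leq\frac{2M'k!}{R^k}\sum_{m\geq 0}\binom{m+k}{k}\left(\frac{r}{R}\right)^m,
\]
after reindexing with $m=n-k$ and using $(m+k)!/m!=k!\binom{m+k}{k}$. The identity $\sum_{m\geq 0}\binom{m+k}{k}x^m=(1-x)^{-(k+1)}$ applied at $x=r/R$ collapses the right-hand sum to $k!R^{k+1}/(R-r)^{k+1}$, producing the announced bound $2k!(M-\Re f(s_0))R/(R-r)^{k+1}$. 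The only delicate point is the Hadamard-type coefficient inequality; once that is in place via the orthogonality and mean-value argument above, the remainder is a routine summation.
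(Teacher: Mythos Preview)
Your proof is correct and is precisely the classical Borel--Carath\'eodory argument for derivatives. The paper itself does not give a proof of this lemma but simply refers to Prachar, \emph{Primzahlverteilung}, pp.~384--385; your write-up supplies in full what the paper cites, and the method (subtract $f(s_0)$, extract the Hadamard coefficient inequality $|a_n|R^n\le 2(M-\Re f(s_0))$ via the Fourier/mean-value identity, then differentiate the power series and sum the negative-binomial series) is the standard one found there. One cosmetic slip: after applying $\sum_{m\ge 0}\binom{m+k}{k}x^m=(1-x)^{-(k+1)}$ at $x=r/R$, the sum itself equals $R^{k+1}/(R-r)^{k+1}$, not $k!R^{k+1}/(R-r)^{k+1}$; the factor $k!$ is already sitting in your prefactor $2M'k!/R^k$, and your final stated bound is correct.
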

\begin{proof}
See \cite{Prachar}, p. 384-385
\end{proof}

We are in a position to prove Lemma \ref{PochodnaLogarytmicznaZeta}.
\begin{proof}
Let $B = \frac{A_1}{6}=0.01325$, where $A_1$ is the constant appearing in (\ref{obszar_wolny_od_zer}). Let $s_0=\sigma_0+it_0$, $t_0\geq 0$,
\begin{align}\label{stalasigma_zero}
\sigma_0=1+\frac{B}{L(t_0)}.
\end{align}
where $L(t_0)$ is  defined in (\ref{sta쓰L}). We define the function
\begin{align*}
H(s,\chi)=\log\frac{g(s,\chi)}{g(s_0,\chi)}, \quad g(s)=h(s,\chi)\prod_{\rho}(s-\rho)^{-1},
\end{align*}
where $h(s,\chi)=\zeta(s,\chi)$ if $\chi\neq\chi_0$ and $h(s,\chi_0)=(s-1)\zeta(s,\chi_0)$, where  $\rho$ are  zeros of the function $h(s,\chi)$ in the circle $|s-s_0|\leq \frac{1}{2}$.
Firstly, we estimate $\left|\frac{g(s)}{g(s_0)}\right|$. Lemmas \ref{Lematzeta_notprincipal}, \ref{Lematodwrotnosc_zety} and \ref{OszacZetaGlowny} shows that in the circle $|s-\sigma_0|\leq 1$
\begin{align}\label{lematwaski przedzialZeta1}
\left|\frac{\zeta(\sigma+it,\chi)}{\zeta(\sigma_0,\chi)}\right| \leq 1.4(1 +\varepsilon_\chi)A(\mathfrak{f})^{1+2\eta}\zeta(1+\eta)^{2r_2}\zeta_K(\sigma_0) (t_0+2)^{r_2(1+2\eta)},
\end{align}
for any character $\chi\neq \chi_0$ modulo $\mathfrak{f}$, where $\varepsilon_\chi=0$ or 1 to accordingly whether $\chi$ is primitive or not, and
\begin{align}\label{lematwaski przedzialZeta1A}
& \left|\frac{\zeta(\sigma+it,\chi_0)(s-1)}{\zeta(\sigma_0,\chi_0)(\sigma_0-1)}\right|\leq\\
& \qquad \qquad  \leq \frac{L(t_0)}{B}(4+|t_0|)(2+|t_0|)^{r_2(1+\eta)} (|\Delta| N\mathfrak{f})^{\frac{1+\eta}{2}}\zeta_K(1+\eta)\zeta_K(\sigma_0). \nonumber
\end{align}
On $|s-s_0|=1$, $|s_0-\rho| \leq \frac{1}{2}$ and $|s-\rho|\geq \frac{1}{2}$. From (\ref{lematwaski przedzialZeta1}),
(\ref{lematwaski przedzialZeta1A}) and the maximum principle we obtain
\begin{align}\label{oszcowanieilorazuG}
 & \left|\frac{\zeta(s,\chi)\prod_{\rho}(s_0-\rho)}{\zeta(s_0,\chi)\prod_{\rho}(s-\rho)}\right|\leq\\
& \qquad \qquad \leq 1.4(1+\varepsilon_\chi)A(\mathfrak{f})^{1+2\eta} \zeta(1+\eta)^{2r_2}\zeta_K(\sigma_0) (t_0+2)^{r_2(1+2\eta)}, \nonumber
\end{align}
and
\begin{align}\label{oszcowanieilorazuGA}
 & \left|\frac{(s-1)\zeta(s,\chi_0)\prod_{\rho}(s_0-\rho)}{(s_0-1)\zeta(s_0,\chi_0)\prod_{\rho}(s-\rho)}\right|\leq \left|\frac{(s-1)\zeta(s,\chi_0)\prod_{\rho}(s_0-\rho)}{(\sigma_0-1)\zeta(s_0,\chi_0)\prod_{\rho}(s-\rho)}\right|\\
& \qquad \qquad \leq \frac{L(t_0)}{B}(4+|t_0|)(2+|t_0|)^{r_2(1+\eta)} (|\Delta| N\mathfrak{f})^{\frac{1+\eta}{2}}\zeta_K(1+\eta)\zeta_K(\sigma_0) \nonumber
\end{align}
in the circle $|s-s_0|\leq 1$. Secondly, we apply Lemma \ref{Prachar2} to the function $H(s,\chi)$ with $k=1$, $R=\frac{1}{2}$ and $r=\frac{1}{4}$. The function $H(s,\chi)$ is regular in the circle
$|s-s_0|\leq \frac{1}{2}$, so by (\ref{oszcowanieilorazuG}), (\ref{oszcowanieilorazuG}) we obtain
\begin{align*}
& \Re H(s,\chi)  =\log \left| \frac{g(s,\chi)}{g(s_0,\chi)} \right| \leq \\
& \quad \leq \left\{ \begin{array}{lcc}
                   \log\left(1.4(1+\varepsilon_\chi)A(\mathfrak{f})^{1+2\eta} \zeta(1+\eta)^{2r_2}\zeta_K(\sigma_0) (t_0+2)^{r_2(1+2\eta)})\right), & \mbox{if} & \chi\neq\chi_0\\
                    \log \left(\frac{L(t_0)}{B}(4+|t_0|)(2+|t_0|)^{r_2(1+\eta)} (|\Delta| N\mathfrak{f})^{\frac{1+\eta}{2}}\zeta_K(1+\eta)\zeta_K(\sigma_0)\right), & \mbox{if} & \chi=\chi_0.
                     \end{array}\right.
\end{align*}
in the circle $|s-s_0|\leq \frac{1}{2}$. Therefore, in the circle $|s-s_0|\leq\frac{1}{4}$ we have
\begin{align}\label{szcowanie_zer1}
& \left|\frac{\zeta'}{\zeta}(s,\chi) - \sum_{\rho}\frac{1}{s-\rho}\right|  \leq\\
& \qquad \qquad \leq 16\log\left(1.4 (1+\varepsilon_\chi)A(\mathfrak{f})^{1+2\eta} \zeta(1+\eta)^{2r_2}\zeta_K(\sigma_0) (t_0+2)^{r_2(1+2\eta)}\right), \nonumber
\end{align}
and
\begin{align}\label{szcowanie_zer1A}
&\left|\frac{\zeta'}{\zeta}(s,\chi_0) +\frac{1}{s-1} - \sum_{\rho}\frac{1}{s-\rho}\right|  \leq \\
&\qquad \qquad \leq 16 \log\left(\frac{L(t_0)}{B}(4+|t_0|)(2+|t_0|)^{r_2(1+\eta)} (|\Delta| N\mathfrak{f})^{\frac{1+\eta}{2}}\zeta_K(1+\eta)\zeta_K(\sigma_0)\right).  \nonumber
\end{align}
Finally, we estimate $|\sum_{\rho}\frac{1}{s_0-\rho}|$ and $|\sum_{\rho}\frac{1}{s-\rho}|$. In \cite{Israilov}  Israilov  show that, if $1 < \sigma \leq 2$ then
\begin{align*}
-\frac{\zeta'}{\zeta}(\sigma) <\frac{1}{\sigma -1} - \gamma +C_1(\sigma-1),
\end{align*}
where $C_1=0.1875463$ . Hence, (\ref{stalasigma_zero}) shows
\begin{align}\label{szcowanie_zer2}
\left|\frac{\zeta'}{\zeta}(s_0,\chi) \right|  \leq 2r_2 \frac{L(t_0)}{B} + 2r_2C_1\frac{B}{L(t_0)},
\end{align}
and
\begin{align}\label{szcowanie_zer2A}
\left|\frac{\zeta'}{\zeta}(s_0,\chi_0) +\frac{1}{s_0-1}\right|  \leq 4r_2\frac{L(t_0)}{B} + 2r_2C_1\frac{B}{L(t_0)}.
\end{align}
By (\ref{szcowanie_zer1}),   (\ref{szcowanie_zer2}),   we obtain
\begin{align}\label{oszacowanie_pochLog2}
\left|\sum_{\rho}\frac{1}{s_0-\rho}\right|& \leq16\log\left(1.4 (1+\varepsilon_\chi)A(\mathfrak{f})^{1+2\eta} \zeta(1+\eta)^{2r_2}\zeta_K(\sigma_0) (t_0+2)^{r_2(1+2\eta)})\right)\\
& +  2r_2 \frac{L(t_0)}{B} + 2r_2C_1\frac{B}{L(t_0)}, \nonumber
\end{align}
and (\ref{szcowanie_zer1A}), (\ref{szcowanie_zer2A})
\begin{align}\label{oszacowanie_pochLog2A}
\left|\sum_{\rho}\frac{1}{s_0-\rho}\right|  & \leq 16 \log \left(\frac{L(t_0)}{B}(4+|t_0|)(2+|t_0|)^{r_2(1+\eta)} (|\Delta| N\mathfrak{f})^{\frac{1+\eta}{2}}\zeta_K(1+\eta)\zeta_K(\sigma_0)\right)\\
& + 4r_2\frac{L(t_0)}{B} + 2r_2C_1\frac{B}{L(t_0)}. \nonumber
\end{align}
in the circle $|s-s_0|\leq\frac{1}{4}$. Now, we define
\begin{align*}
r_1=\frac{2B}{L(t_0)}< \frac{1}{4}.
\end{align*}
By Theorem \ref{Fryska_obszr_wolnyodZer}, the function $\zeta(s,\chi)\neq 0$ in the region $|s-s_0|\leq r$, $\Re(s-s_0) > -2r_1$. Hence
\begin{align*}
|s_0-\rho|\geq 2r_1, \quad |s-\rho|\geq \frac{1}{2}|s_0-\rho|, \quad \Re(s_0-\rho)\geq 2r_1
\end{align*}
for all zeros $\rho$ in the circle $|s-s_0|\leq \frac{1}{4}$, and for $s$ in the circle $|s-s_0|\leq r_1$.  For $|s-s_0|\leq r_1$  we obtain
\begin{align*}
\left|\sum_{\rho} \frac{1}{s-\rho}- \sum_{\rho} \frac{1}{s_0-\rho}\right| & \leq \sum_{\rho} \frac{|s-s_0|}{|s-\rho||s_0-\rho|}\leq \sum_{\rho} \frac{r_1}{\frac{1}{2}|s_0-\rho|^2}\\
& \leq \sum_{\rho} \frac{\Re(s_0-\rho)}{|s_0-\rho|^2}\ \leq \sum_{\rho} \Re\frac{1}{s_0-\rho} \leq \left|\sum_{\rho} \frac{1}{s_0-\rho} \right|.
\end{align*}
Thus,
\begin{align}\label{oszacowanie_pochLog3}
\left|\sum_{\rho} \frac{1}{s-\rho}\right| & \leq 2 \left|\sum_{\rho} \frac{1}{s_0-\rho} \right|.
\end{align}
From (\ref{szcowanie_zer1}), (\ref{oszacowanie_pochLog2}) and (\ref{oszacowanie_pochLog3}) we have
\begin{align*}
\left|\frac{\zeta'}{\zeta}(s,\chi)\right| & \leq
 4r_2 \frac{L(t_0)}{B} + 4r_2C_1\frac{B}{L(t_0)} \\
&+  32\log\left(1.4 (1+\varepsilon_\chi)A(\mathfrak{f})^{1+2\eta} \zeta(1+\eta)^{2r_2}\zeta_K(\sigma_0) (t_0+2)^{r_2(1+2\eta)}\right),
\end{align*}
and by (\ref{szcowanie_zer1}), (\ref{oszacowanie_pochLog2A}) and (\ref{oszacowanie_pochLog3})
\begin{align*}
\left|\frac{\zeta'}{\zeta}(s,\chi) + \frac{1}{s-1}\right| & \leq 8r_2\frac{L(t_0)}{B} + 4r_2C_1\frac{B}{L(t_0)}\\
& + 32 \log \left(\frac{L(t_0)}{B}(4+|t_0|)(2+|t_0|)^{r_2(1+\eta)} (|\Delta| N\mathfrak{f})^{\frac{1+\eta}{2}}\zeta_K(1+\eta)\zeta_K(\sigma_0)\right)
\end{align*}
in the circle $|s-s_0|\leq r_1$, and consequently in the strip
\begin{align*}
1-\frac{B}{L(t)}=1-\frac{A_1}{6L(t)}< \sigma <1 + \frac{3B}{L(t)}=1 + \frac{A_1}{2L(t)}.
\end{align*}
Put $s_0=\sigma_0+t_0$, $t_0\geq 0$, and
\begin{align*}
 1 + \frac{A_1}{2L(t_0)} \leq \sigma \leq 3.
\end{align*}
Lemma \ref{Prachar1} and (\ref{lematwaski przedzialZeta1}), (\ref{lematwaski przedzialZeta1A}) shows that
\begin{align*}
 -\frac{\zeta '}{\zeta}(\sigma_0,\chi)\leq 4 \log \left(1.4(1 +\varepsilon_\chi)A(\mathfrak{f})^{1+2\eta}\zeta(1+\eta)^{2r_2}\zeta_K(\sigma_0) (t_0+2)^{r_2(1+2\eta)}\right)
\end{align*}
for any character $\chi\neq \chi_0$ modulo $\mathfrak{f}$, where $\varepsilon_\chi=0$ or 1 to accordingly whether $\chi$ is primitive or not, and
\begin{align*}
 -\frac{\zeta '}{\zeta}(\sigma_0,\chi_0) & \leq \frac{1}{\sigma_0 -1} + \\
 & + 4 \log \left(\frac{L(t_0)}{B}(4+|t_0|)(2+|t_0|)^{r_2(1+\eta)} (|\Delta| N\mathfrak{f})^{\frac{1+\eta}{2}}\zeta_K(1+\eta)\zeta_K(\sigma_0)\right).
\end{align*}
Therefore,
\begin{align*}
 \left|\frac{\zeta '}{\zeta}(\sigma_0,\chi)\right|\leq 4 \log \left(1.4(1 +\varepsilon_\chi)A(\mathfrak{f})^{1+2\eta}\zeta(1+\eta)^{2r_2}\zeta_K(\sigma_0) (t_0+2)^{r_2(1+2\eta)}\right)
\end{align*}
for any character $\chi\neq \chi_0$ modulo $\mathfrak{f}$, where $\varepsilon_\chi=0$ or 1 to accordingly whether $\chi$ is primitive or not, and
\begin{align*}
 \left|\frac{\zeta '}{\zeta}(\sigma_0,\chi_0)\right|& \leq \frac{2L(t_0)}{A_1} + \\
 & + 4 \log \left(\frac{L(t_0)}{B}(4+|t_0|)(2+|t_0|)^{r_2(1+\eta)} (|\Delta| N\mathfrak{f})^{\frac{1+\eta}{2}}\zeta_K(1+\eta)\zeta_K(\sigma_0)\right).
\end{align*}
By the above we obtain
\begin{align*}
 \left|\frac{\zeta '}{\zeta}(s_0,\chi)\right| & \leq \left|\frac{\zeta '}{\zeta}(\sigma_0,\chi)\right| \leq\\
     & 4 \log \left(1.4(1 +\varepsilon_\chi)A(\mathfrak{f})^{1+2\eta}\zeta(1+\eta)^{2r_2}\zeta_K(\sigma_0) (t_0+2)^{r_2(1+2\eta)}\right)
\end{align*}
for any character $\chi\neq \chi_0$ modulo $\mathfrak{f}$, where $\varepsilon_\chi=0$ or 1 to accordingly whether $\chi$ is primitive or not, and
\begin{align*}
 &\left|\frac{\zeta '}{\zeta}(s_0,\chi_0)  + \frac{1}{s_0-1}\right| \leq \left|\frac{\zeta '}{\zeta}(\sigma_0,\chi)\right| + \frac{1}{\sigma_0-1}\leq   \frac{4L(t_0)}{A_1} + \\
 & \qquad \qquad+ 4 \log \left(\frac{L(t_0)}{B}(4+|t_0|)(2+|t_0|)^{r_2(1+\eta)} (|\Delta| N\mathfrak{f})^{\frac{1+\eta}{2}}\zeta_K(1+\eta)\zeta_K(\sigma_0)\right).
\end{align*}
The proof is completed by applying
\begin{align*}
\zeta_K(\sigma_0)\leq\zeta(\sigma_0)^{2r_2}\leq \left(1+\frac{6L(t_0)}{A_1}\right)^{2r_2}.
\end{align*}
\end{proof}

\bibliographystyle{splncs03}
\bibliography{citations}

\end{document}